\def\Z{\mathbb Z}
\def \F{\mathbb F}
\def\ord{\mathop{\rm ord}\nolimits}
\def\rad{\mathop{\rm rad}}
\theoremstyle{plain}
\newtheorem{theorem}{Theorem}[section]
\newtheorem{lemma}[theorem]{Lemma}
\newtheorem{definition}[theorem]{Definition}
\newtheorem{corollary}[theorem]{Corollary}
\def\qed{\hfill\hbox{$\square$}}
\theoremstyle{definition}
\author[N. E. Ar\'evalo Baquero]{Nelcy Esperanza Ar\'evalo Baquero}
\address{
Departamento de Matem\'{a}tica\\
Universidade Federal de Rio Grande do Sul\\
UFRGS\\
Porto Alegre, RS \\
 91509-900\\
 Brazil\\
 }
 \email{nearevalob@unal.edu.co}
\author[F. E. Brochero Mart\'{\i}nez]{F. E. Brochero Mart\'{\i}nez}
\address{
Departamento de Matem\'{a}tica\\
Universidade Federal de Minas Gerais\\
UFMG\\
Belo Horizonte, MG\\
 31270-901\\
 Brazil\\
 }
 \email{fbrocher@mat.ufmg.br }
\title{ Factorization of  Dickson polynomials over Finite Fields}
\keywords{irreducible polynomial, irreducible factors, factorization, Dickson polynomials}
\date{\today
}
\subjclass[2000]{ }
\subjclass[2010]{12E20 (primary) and 11T30(secondary)}
\begin{document}
\maketitle
\begin{abstract}
Let $D_n(x;a)$ and $E_n(x;a)\in \F_q[x]$  be  Dickson polynomials  of first and second kind respectively, where $\F_q$ is a finite field with $q$ elements.  In this article we show explicitly the irreducible factors these polynomials in the case that every prime divisor of $n$ divides $q-1$.
This result generalizes the results find in \cite{Chou},\cite{FiYu}, \cite{Tosun} and  \cite{Tosun1}.

\end{abstract}
\section{Introduction}
The Dickson polynomials $D_n(x;a)$ over finite fields was introduce in the nineteenth century  by Leonard E. Dickson as part of his PhD thesis.
These  polynomials have several interesting applications and properties, being mainly examples of families of permutation polynomials and  now $ D_{n} (x, a) \in \F_q [x] $ are  called Dickson polynomials of the first kind, to distinguish them from their variations introduced by Schur in 1923, which are actually  called  the Dickson polynomials of the second kind $E_{n} (x, a) \in\F_q [x] $.

When $ a = 0$,  $ D_n (x, 0) = x^n$ 
is a permutation polynomial  (PP)  in $ \F_q $, if and only if, $\gcd(n, q-1) = 1 $. In the case that  $ a\in \F_q^* $, it is known that the Dickson polynomial $ D_n (x, a) $ induces a permutation of $\F_q $, if and only if $\gcd(n, q^2 -1) = 1$ (see {\cite[Theorem 7.16]{LiNi}} or {\cite[Theorem 3.2] {Lidl2}}). This simple condition provides a very effective test for determining which polynomials $ D_n (x, a) $ induce permutations of $\F_q $, and furthermore, once the condition is satisfied, we obtain  $q-1 $ different permutations, one for each of the elements $ a \in \F_q^*$.
These polynomials have several applications and interesting properties.  In fact,  with the digital advances,  practical  applications  of permutation polynomials can be found in cryptography, combinatorial designs, error-correcting codes, as well as hardware implementation of turbo decoders, feedback shift-register, linear-feedback shift register, among other applications , as well as pure theoretical results.

In the field of complex numbers, Dickson polynomials are essentially equivalent to the classical Chebyshev polynomials $ T_n (x) $  with a simple change of variable. In fact, Dickson polynomials are sometimes referred as  Chebyshev Polynomials  in mathematical  literature.
They were rediscovered by Brewer , who used certain Dickson polynomials of the first kind to calculate  Brewer sums (see \cite{Alaca} and \cite{brewer}).
 The Dickson polynomials  are also related in some way to Kloosterman sum;  in \cite{Moi}  Moisio proved that,  if  Kloosterman sum  $K_{q^n}(a)$ is different to $1$ for some $a\in F_q^*$, then the minimal polynomial $f(x)$ of $K_q(a)$ over $\Z$ must be a factor of $D_n(x,q)+(-1)^{n-1}$. 

In recent years, these polynomials have been  object  of  intense studies; in  \cite{Lidl2}, the authors  give  a comprehensive survey  about Dickson polynomials, including applications such as  Dickson cryptosystems, Dickson pseudoprimes analogous to Carmichael numbers, etc. In particular, the problem of factorization of these polynomials have been studied by several authors. 
For example, in \cite{GaMu}, Gao and Mullen study the irreducibility  of $D_n(x,a)+b$, where $n$ is odd (see also \cite{Tur}). Their argument is based on a well-known  irreducibility criterion for binomial $x^t-\alpha$ over finite fields. 
Chou \cite {Chou} and later  Bhargava and Zieve \cite{Bhargava} study a factorization of the Dickson polynomials of the first kind on $ \F _q $ using methods more complicated   than those we use here, because even though the factorization found by  Bhargava and Zieve   is made in $\F_q $, the factors contain elements outside $ \F _q $. 
Fitzgerald and Yucas \cite {FiYu}  give factorizations of cyclotomic polynomials $Q_{3\cdot2^n}(x)$  for all $\F_q$ of characteristic not igual to 2 or 3. They apply this result to  get explicit factorizations of Dickson polynomials $D_{3\cdot 2^n}(x)$ and $E_{3\cdot 2^n-1}(x)$, respectively.
Tosun \cite{Tosun} extends the previous one by obtaining explicit factors of Dickson polynomials of the first  and second kind  $D_{3 \cdot   2^m}(x, a)$ and  $E_{3\cdot 2^n-1}(x,a)$ over $ \F _q $, where $a$ be an arbitrary element of $\F_q$ and odd characteristic.

In this work, we study the problem of splitting $ D_n (x, a) $ and $ E_n (x, a) $ into  irreducible factors   over $ \F _q $, where  $ \F _q $ is a finite field with  $ q $ elements, $n$  is a positive integer some that every prime divisor of $n$ divides $q-1$ for Dickson polynomials of the first kind and every prime divisor of $n+1$ divide $q-1$ for Dickson polynomials of the second kind.  
The result will be divide in several  cases  depending  essentially on  the class  of $q$ modulo $4$ and also if $a$ is an square in $\F_q$ . 

We note that when $n$ is a power of $2$, the condition $\rad(n)|(q-1)$ is trivial and then the results in \cite{Tosun} are  particular case of our results.

The structure of this paper is as follows. In Section 2, we give a formal definition of Dickson polynomials with parameter $a$, some useful properties of that polynomial, and  in particular,  the $a$-self reciprocal property. In addition, we show, without proof, some results about the factorization of polynomial of the form $x^{m}\pm 1$.   The factorization of Dickson polynomials  of the first kind in odd characteristic is determined in Section 3.  In Section 4, we give the factorization of Dickson polynomials of the second  kind in odd characteristic. In Section 5, we provide a factorizations of Dickson polynomials in even characteristic.

\section{ Preliminaries}
Throughout this paper, $\F_q$ will denote the finite field with $q$ elements, where $q$ is a power of a prime $p$.  For any $a\in \F_q^*$, $\ord_q(a)$ will denote the order of $a$ in the cyclic group $\F_q^*$ and for each positive integer $n\ge 2$, $\rad(n)$ denotes the product of every prime factor that divides $n$. 
The Dickson polynomials of first and second kind are defined as:

\begin{definition}
Let  $n\geq 1$ be an integer and $a\in\mathbb{F}_{q}$. The polynomial  $D_{n}(x,a)\in\mathbb{F}_{q}[x]$ defined as \begin{equation}
D_{n}(x,a)=\sum_{i=0}^{\lfloor n/2\rfloor}\dfrac{n}{n-i} {n-i \choose i }(-a)^{i}x^{n-2i}
  \label{eqn:Dn2}
\end{equation}
is called  the $n$-th Dickson polynomial of the first kind with parameter $a$.
\end{definition}

\begin{definition}
Let  $n\geq 1$ be an integer and $a\in\mathbb{F}_{q}$. The polynomial  $E_{n}(x,a)\in\mathbb{F}_{q}[x]$ defined as \begin{equation}
E_{n}(x,a)=\sum_{i=0}^{\lfloor n/2\rfloor} {n-i \choose i }(-a)^{i}x^{n-2i}
  \label{eqn:Dn2}
\end{equation}
is called  the $n$-th Dickson polynomial of the second kind with parameter $a$.
\end{definition}

The $n$-th Dickson polynomials $D_n(x;a)$ and $E_n(x;a)$ are  the unique degree $n$ polynomials satisfying  the formal  relations 
$$D_n\left(y+\frac 1y;a\right)=y^n+\frac{a^n}{y^{n}}\quad\text{and}\quad  E_n\left(y+\frac 1y;a\right)=\dfrac{y^n-\dfrac {a^n}{y^n}}{y-\dfrac ay},$$
respectively. These equalities are known as Waring's Identities.

In the case when $a=1$, we denote the polynomial $D_n(x;1)$ and $E_n(x;1)$ as $D_n(x)$ and $E_n(x)$, respectively. In this case, these polynomials can be see as polynomial with integer coefficients and then   $D_{pn}(x)=(D_n(x))^p$ and $E_{pn}(x)=(E_n(x))^p$, where $p$ is the characteristic of the field. 

The following lemma show some interesting properties of Dickson polynomials, that will be useful in the next sections.

\begin{lemma}[\cite{Lidl2}, Lemma 2.6]\label{proptipo1}
The Dickson polynomials $D_{n}(x,a)$ and $E_n(x,a)$ satisfy the following properties
\begin{enumerate}[(i)]
\item $D_{mn}(x,a)=D_{m}(D_{n}(x,a),a^{n})$ for $m\geq 0$ and $n\geq 0$,
\item $D_{np^{r}}(x,a)=[D_{n}(x,a)]^{p^{r}}$ for $n\geq 0$, $r\geq 0$, where $p$ is the characteristic of $\mathbb{F}_{q}$,
\item $b^{n}D_{n}(x,a)=D_{n}(bx,b^{2}a)$ for $n\geq 0$,
\item $b^{n}D_{n}(b^{-1}x,a)=D_{n}(x,b^{2}a)$ for $n\geq 0$ and $b\neq 0$.
\item $E_{n}(x,a)=[E_{m}(x,a)]^{p^{r}}(x^{2}-4a)^{\frac{p^{r}-1}{2}}$ for  $n\geq 0$ and $r\geq 0$, where  $p$ is the characteristic of  $\mathbb{F}_{q}$ and $n+1=(m+1)p^{r}$,
\item $b^{n}E_{n}(x,a)=E_{n}(bx,b^{2}a)$ for $n\geq 0$,
\item $b^{n}E_{n}(b^{-1}x,a)=E_{n}(x,b^{2}a)$ for $n\geq 0$ and $b\neq 0$.
\end{enumerate}
\end{lemma}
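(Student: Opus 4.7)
The plan is to derive all seven identities from the two Waring formulas stated just before the lemma, namely $D_n(y+a/y,a)=y^n+a^n/y^n$ and $(y-a/y)\,E_n(y+a/y,a)=y^{n+1}-a^{n+1}/y^{n+1}$, viewed as equalities in the Laurent polynomial ring $\F_q[y,y^{-1}]$. Each side of every proposed identity is invariant under the involution $y\mapsto a/y$ (which fixes $x=y+a/y$), so these identities in $y$ descend to polynomial identities in $x$.

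For parts (i)--(iv), short direct calculations suffice. Setting $z=y^n$ in the Waring formula for $D$ gives $D_n(x,a)=z+a^n/z$, and applying Waring once more with parameter $a^n$ yields $D_m(D_n(x,a),a^n)=z^m+a^{nm}/z^m=y^{nm}+a^{nm}/y^{nm}=D_{nm}(x,a)$, proving (i). For (ii), Frobenius in characteristic $p$ gives $D_{p^r}(y+a/y,a)=y^{p^r}+a^{p^r}/y^{p^r}=(y+a/y)^{p^r}$, hence $D_{p^r}(x,a)=x^{p^r}$, and (ii) follows by combining this with (i) at $m=p^r$. For (iii), the substitution $y'=by$ satisfies $bx=y'+(b^2a)/y'$, so $D_n(bx,b^2a)=(y')^n+(b^2a)^n/(y')^n=b^n\bigl(y^n+a^n/y^n\bigr)=b^n D_n(x,a)$. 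Part (iv) is (iii) with $x$ replaced by $b^{-1}x$.

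The $E$-identities (vi) and (vii) follow from the same substitution $y'=by$: it sends $y-a/y$ to $b(y-a/y)$, and that extra $b$ in the denominator of the Waring formula for $E$ cancels one factor of $b^{n+1}$ coming from $y'^{n+1}-(b^2a)^{n+1}/y'^{n+1}$, leaving the claimed $b^n$ scaling. For (v), with $n+1=(m+1)p^r$, Frobenius in characteristic $p$ yields
\[
(y-a/y)\,E_n(x,a)=y^{n+1}-\frac{a^{n+1}}{y^{n+1}}=\Bigl(y^{m+1}-\frac{a^{m+1}}{y^{m+1}}\Bigr)^{p^r}=(y-a/y)^{p^r}E_m(x,a)^{p^r},
\]
so dividing both sides by $y-a/y$ and using $(y-a/y)^2=(y+a/y)^2-4a=x^2-4a$ delivers (v).

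The hard part is (v): the leftover factor $(y-a/y)^{p^r-1}$ must be converted into a polynomial in $x$, which is only possible when $p^r-1$ is even, i.e., $p$ is odd -- which is implicit in the exponent $(p^r-1)/2$ appearing in the statement. Under this assumption, $(y-a/y)^{p^r-1}=\bigl((y-a/y)^2\bigr)^{(p^r-1)/2}=(x^2-4a)^{(p^r-1)/2}$, finishing the proof. The division by $y-a/y$ itself is legitimate because it is a non-zero divisor in $\F_q[y,y^{-1}]$, and once the even-power conversion has been performed, the right-hand side is manifestly a polynomial in $x$ alone.
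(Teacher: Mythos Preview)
The paper does not supply its own proof of this lemma; it is quoted without proof from \cite{Lidl2}, Lemma~2.6. Your derivation of all seven identities from the Waring formulas is correct and is exactly the standard argument used in that reference, including the observation that part~(v) tacitly requires $p$ odd so that the exponent $(p^{r}-1)/2$ is an integer and $(y-a/y)^{p^{r}-1}=(x^{2}-4a)^{(p^{r}-1)/2}$.
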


The following definition is borrowed from \cite{FiYu3} and \cite{FiYu2},  that is also one possible generalization of the notion of reciprocal polynomial. 

\begin{definition} Let $a$ be an element in $\F_q^*$.  For each monic  polynomial $f(x)\in \F_q[x]$ of degree $n$ with $f(0)\ne 0$,  let define $f_a^*(x)$, the $a$-reciprocal of $f (x)$, by 
$$f_a^*(x) =\frac {x^n}{f(0)} f\left(\frac ax\right).$$
In the case when $f(x)=f_a^*(x)$ we say that $f(x)$ is $a$-self reciprocal. 
\end{definition}

It is easy to prove that $f_a^*(x)$ is a monic polynomial  and if $\alpha\in \overline \F_q$ is a root of $f(x)$, then $\frac a{\alpha}$ is a root of $f_a^*(x)$. In addition  
$$(f\cdot g)_a^*(x)=f_a^*(x)\cdot g_a^*(x)\quad\text{and}\quad (f_a^*)_a^*(x)=f(x),$$ 
for any $f$ and $g$ monic polynomials with $(f\cdot g)(0)\ne 0$.  In particular, the polynomial $f(x)$ is an irreducible polynomial in $\F_q[x]$ if and only if $f_a^*(x)$ is  also irreducible.

\begin{lemma}
Let $f(x)$ be  a $a$-self reciprocal polynomial of degree $n=2m$ in  $\mathbb{F}_{q}$. Then $f(x)$ can be write as
\[f(x)=b_{m}x^{m}+\sum_{i=0}^{m-1}b_{2m-i}(x^{2m-i}+a^{m-i}x^{i}),\]
where  $b_{j}\in\mathbb{F}_{q}$ for every $j=0,1,\ldots, m$.
\end{lemma}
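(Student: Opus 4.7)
The plan is to unfold the definition of the $a$-reciprocal into a system of coefficient identities and then group paired terms. Write $f(x)=\sum_{j=0}^{2m}c_j x^j$ with $c_{2m}=1$. The $a$-self reciprocal condition $f=f_a^*$ is equivalent to
\[
f(0)\cdot f(x) \;=\; x^{2m}\,f\!\left(\tfrac{a}{x}\right),
\]
and expanding the right-hand side gives $x^{2m}f(a/x)=\sum_{k=0}^{2m}c_{2m-k}\,a^{2m-k}x^{k}$. Comparing coefficients of $x^{k}$ on both sides yields the family of relations
\[
f(0)\,c_k \;=\; c_{2m-k}\,a^{2m-k}, \qquad 0\le k\le 2m.
\]

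Next I would pin down $f(0)$. Setting $k=0$ in the relation above gives $c_0^{2}=a^{2m}$, and the monic normalization used to define $f_a^*$ (dividing by $f(0)$ so that the leading coefficient equals $1$) forces the choice $c_0=a^{m}$. Substituting back produces the cleaner symmetry
\[
c_i \;=\; c_{2m-i}\,a^{m-i} \qquad (0\le i\le m-1),
\]
while the middle coefficient $c_m$ is left unconstrained.

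Finally I would collect terms symmetrically around the middle. Setting $b_{2m-i}:=c_{2m-i}$ for $0\le i\le m-1$ and $b_m:=c_m$, each pair of monomials $c_{2m-i}x^{2m-i}+c_i x^i$ factors as $b_{2m-i}(x^{2m-i}+a^{m-i}x^{i})$ by the symmetry relation, and the isolated middle term is $b_m x^m$. Summing gives the stated form. The only subtle point is the sign choice in $c_0=\pm a^m$; the formulation of $f_a^*$ with division by $f(0)$ singles out the sign $c_0=a^m$, after which the rest is bookkeeping. I expect this sign/consistency step to be the only place that requires care; the grouping itself is routine.
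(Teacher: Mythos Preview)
Your approach is exactly the paper's: its entire proof reads ``The result follows directly from comparing the coefficient of $f$ and $f_a^*$.'' You have simply written out that comparison in detail.

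There is, however, a genuine gap at the one place you flagged as subtle. From $c_0^2=a^{2m}$ you conclude $c_0=a^m$, arguing that the division by $f(0)$ in the definition of $f_a^*$ forces this sign. It does not: that division makes $f_a^*$ monic for \emph{either} choice $c_0=\pm a^m$, so nothing in the definition rules out $c_0=-a^m$. Concretely, in odd characteristic take $m=1$ and $f(x)=x^2-a$. Then
\[
f_a^*(x)=\frac{x^2}{-a}\left(\frac{a^2}{x^2}-a\right)=x^2-a=f(x),
\]
so $f$ is monic and $a$-self reciprocal, yet $f(0)=-a=-a^m$ and $f$ cannot be written as $b_1x+b_2(x^2+a)$. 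With $c_0=-a^m$ the coefficient relation becomes $c_i=-c_{2m-i}a^{m-i}$, and the paired terms group as $b_{2m-i}(x^{2m-i}-a^{m-i}x^i)$ instead.

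So the lemma as stated needs an extra hypothesis such as $f(0)=a^m$ (automatic in characteristic~$2$), and the paper's one-line proof does not address this either; the gap is in the statement, not only in your argument. For the paper's purposes this is harmless: every polynomial to which $\Psi_a$ is actually applied lies in the image of $\Phi_a$, and $\Phi_a(g)(0)=a^m$ for all $g\in P_m$, so the ``good'' sign always holds where it matters.
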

\begin{proof}
The result follows directly from comparing the coefficient of $f$ and $f_a^*$.\qed
\end{proof}

\begin{definition}\label{psiphi}
Let $P_{m}$ be the family of monic polynomials of degree $n$  in $\mathbb{F}_{q}$ and $S_{2m,a}$  be the family of  monic polynomials  $a$-self reciprocal of degree $2m$ in  $\mathbb{F}_{q}$. 
For each positive integer $m$, let
\begin{equation*}
\Phi_{a}:P_{m}\rightarrow S_{2m,a} \quad\text{and}\quad \Psi_{a}:S_{2m,a}\rightarrow P_{m}
\end{equation*}
be the applications defined as
$$\Phi_{a}(f(x))=x^{m}f\left(x+\dfrac{a}{x}\right)$$
and 
\begin{eqnarray*}
\Psi_{a}(g(x))&=&\Psi_{a}\left(b_{m}x^{m}+\sum_{i=0}^{m-1}b_{2m-i}(x^{2m-i}+a^{m-i}x^{i})\right)\\
&=& b_{m}+\sum_{i=0}^{m-1}b_{2m-i}D_{m-i}(x,a).
\end{eqnarray*}
\end{definition}

\begin{theorem}[\cite{FiYu2} Theorem 3.1.]\label{teo3} Let $a$ be an element in $\F_q^*$ and $\Phi_a$ and $\Psi_a$ be as in  Definition \ref{psiphi}. Then 
\begin{enumerate}[a)]
\item  $\Phi_a\circ \Psi_a=Id_{S_{2m,a}}$ and $\Psi_a\circ \Phi_a=Id_{P_{m}}$
\item $\Phi_a$ and $\Psi_a$ are multiplicative functions.
\item If $f(x)$ is a monic irreducible non-trivial $a$-self reciprocal polynomial of degree $2m$, then $\Phi_a(f(x))$ is an irreducible polynomial. 
If  $g(x)$ is an irreducible polynomial of degree $m$ and non a-self
reciprocal, then $\Psi_a (g (x) g_a^* (x))$ is irreducible.
\end{enumerate}
\end{theorem}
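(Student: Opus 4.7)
My plan is to tackle parts (a)--(c) in order, using Waring's identity $D_k(y+a/y,a)=y^k+a^k/y^k$ as the primary computational tool. For part (a), I will first verify $\Phi_a\circ\Psi_a=Id_{S_{2m,a}}$ by direct substitution. Writing $g\in S_{2m,a}$ in the canonical form of the preceding lemma, the image $\Psi_a(g)=b_m+\sum_{i=0}^{m-1}b_{2m-i}D_{m-i}(x,a)$ is an explicit $\F_q$-linear combination of Dickson polynomials. Composing with $\Phi_a$ amounts to substituting $x=y+a/y$ and multiplying by $y^m$, at which point Waring's identity recovers each term $b_{2m-i}(y^{2m-i}+a^{m-i}y^i)$ and reassembles $g(y)$ exactly. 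To obtain $\Psi_a\circ\Phi_a=Id_{P_m}$ for free, I will observe that $P_m$ and $S_{2m,a}$ are both $m$-dimensional affine spaces over $\F_q$ (parameterized by the non-leading coefficients), so the surjectivity of $\Phi_a$ and injectivity of $\Psi_a$ from the previous computation force both maps to be mutually inverse bijections.

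For part (b), multiplicativity of $\Phi_a$ is immediate from the defining formula: if $f\in P_{m_1}$ and $g\in P_{m_2}$, then
\begin{equation*}
\Phi_a(fg)(y)=y^{m_1+m_2}(fg)(y+a/y)=\bigl[y^{m_1}f(y+a/y)\bigr]\bigl[y^{m_2}g(y+a/y)\bigr]=\Phi_a(f)(y)\,\Phi_a(g)(y).
\end{equation*}
The product of monic $a$-self reciprocal polynomials is again $a$-self reciprocal (since $(FG)_a^*=F_a^*G_a^*=FG$), so the domain of $\Psi_a$ is closed under multiplication, and multiplicativity of $\Psi_a$ then follows by conjugating with the bijection of part (a).

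Part (c) is the substantive step, and I will argue both statements by contradiction. (Reading the first claim as concerning $\Psi_a$, which is consistent with the stated domains.) Assume $f\in S_{2m,a}$ is monic irreducible and that $\Psi_a(f)=gh$ in $\F_q[x]$ with $\deg g,\deg h\ge 1$; applying $\Phi_a$ and using (a) and (b) gives $f=\Phi_a(g)\Phi_a(h)$, a nontrivial factorization since $\deg\Phi_a(g)=2\deg g\ge 2$, contradicting the irreducibility of $f$. For the second claim, assume $g$ is monic irreducible of degree $m$, not $a$-self reciprocal, and that $\Psi_a(gg_a^*)=FG$ factors with $\deg F,\deg G\ge 1$. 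Then $\Phi_a(F)\Phi_a(G)=gg_a^*$ with both $\Phi_a(F),\Phi_a(G)$ monic and $a$-self reciprocal. Since $g_a^*$ cannot be $a$-self reciprocal (otherwise $g=(g_a^*)_a^*=g_a^*$, contradicting the assumption on $g$), the only monic $a$-self reciprocal divisors of $gg_a^*$ in $\F_q[x]$ are $1$ and $gg_a^*$ itself, which forces one of $F,G$ to be constant, a contradiction.

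The main delicacy lies in part (c): identifying the monic $a$-self reciprocal divisors of $gg_a^*$ uses the fact that the $*$-orbit $\{g,g_a^*\}$ has size two and therefore cannot split into $a$-self reciprocal irreducible pieces over $\F_q$. The remainder of the argument is a formal chase through parts (a) and (b).
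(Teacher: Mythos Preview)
The paper does not supply its own proof of this theorem; it is quoted verbatim from \cite{FiYu2} (Theorem~3.1) and used as a black box, so there is no in-paper argument to compare against. Your proof is correct and self-contained: the verification of $\Phi_a\circ\Psi_a=Id$ via Waring's identity together with the equal-cardinality observation $|P_m|=|S_{2m,a}|=q^m$ gives part~(a); multiplicativity of $\Phi_a$ is immediate from the definition and transfers to $\Psi_a$ by conjugation; and for part~(c) your contradiction arguments are clean, including the key observation that the only monic $a$-self reciprocal divisors of $gg_a^*$ are $1$ and $gg_a^*$ when $g\neq g_a^*$ are both irreducible. You also correctly identify and repair the typo in the statement (the first claim of~(c) should involve $\Psi_a$, not $\Phi_a$, since $f\in S_{2m,a}$).
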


%

\begin{corollary}\label{lema4}
Let $\Phi_{a}$ be as in the definition \ref{psiphi}. Then
\[\Phi_{a}(D_{n}(x,a))=x^{2n}+a^{n}.\]
\end{corollary}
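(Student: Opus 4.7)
The proof is essentially a one-line computation once the right Waring-type identity is invoked. By the definition of $\Phi_a$, since $D_n(x,a)$ has degree $n$, one has
\[
\Phi_a\bigl(D_n(x,a)\bigr) \;=\; x^{n}\, D_n\!\left(x+\dfrac{a}{x},\,a\right).
\]
So the task reduces to evaluating $D_n$ at the argument $x + a/x$. The plan is to apply Waring's identity in the form
\[
D_n\!\left(y+\dfrac{a}{y},\,a\right)=y^{n}+\dfrac{a^{n}}{y^{n}},
\]
with $y=x$ (this is the appropriate version of Waring's formula for parameter $a$; it follows from the identity recalled after the definition of $D_n$, after substituting $y\mapsto y/\sqrt{a}$ in an extension field, or equivalently from property (iv) of Lemma~\ref{proptipo1} applied with $b^{2}=a$). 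Substituting then gives
\[
\Phi_a\bigl(D_n(x,a)\bigr)=x^{n}\!\left(x^{n}+\dfrac{a^{n}}{x^{n}}\right)=x^{2n}+a^{n},
\]
as required.

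There is essentially no obstacle: the only subtlety is making sure one uses the version of Waring's identity with $y+a/y$ rather than $y+1/y$. Alternatively, one can avoid Waring entirely and argue by uniqueness: $\Phi_a(D_n(x,a))$ is a monic $a$-self-reciprocal polynomial of degree $2n$ (since $D_n(x,a)\in P_n$ and $\Phi_a$ lands in $S_{2n,a}$), and $x^{2n}+a^{n}$ is manifestly $a$-self-reciprocal of degree $2n$; applying the inverse $\Psi_a$ from Theorem~\ref{teo3}(a) and verifying $\Psi_a(x^{2n}+a^{n})=D_n(x,a)$ from the formula in Definition~\ref{psiphi} (only the coefficients $b_{2n}=b_{0}=1$ are nonzero, yielding $D_{n}(x,a)$ directly) closes the argument. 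Either route is routine.
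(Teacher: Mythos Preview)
Your primary argument is exactly the paper's proof: apply the definition of $\Phi_a$ and then the Waring identity $D_n(y+a/y,a)=y^n+a^n/y^n$ with $y=x$ to obtain $x^{2n}+a^n$ in one line. The alternative route you sketch via $\Psi_a$ is also valid but unnecessary here.
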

\begin{proof}
\[\Phi_{a}(D_{n}(x,a))=x^{n}D_{n}\left(x+\dfrac{a}{x},a\right)=x^{n}\left(x^{n}+\left(\dfrac{a}{x}\right)^{n}\right)=x^{2n}+a^{n}.\]\qed
\end{proof}

From this corollary, in order to find the irreducible factors of $D_{n}(x,a)$, it is enough to split into irreducible factors the polynomial $x^{2n}+a^{n}$. The factorization of this last polynomial has been  extensively studied (see \cite{BGM}, \cite{BGO}, \cite{BRS}, \cite{CLT}, \cite{FiYu} ),  then we show, without proof,  some results that will be useful in the following section.

\begin{theorem}[\cite{LiNi} Theorem 3.35] \label{irreducible} Let $n$ be a positive integer and $f(x)\in   \F_q[x]$ be an irreducible polynomial of degree $m$ and exponent $e$.  Then the polynomial $f(x^n)$ is irreducible over $  \F_q$ if and only if the following conditions are satisfied:
\begin{enumerate}
\item $rad(n)$ divides $e$;
\item gcd$(n, (q^m-1)/e)=1$ and
\item  if $4|n$ then $4|q^m-1$.
\end{enumerate}
In addition, in the case when the polynomial $f(x^n)$ is irreducible, then it has degree $mn$ and exponent $en$.
\end{theorem}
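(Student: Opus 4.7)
The plan is to reduce irreducibility of $f(x^n)$ over $\F_q$ to irreducibility of a binomial $x^n-\alpha$ over the splitting field $\F_{q^m}$ of $f$, and then invoke the classical Serret--Capelli criterion for binomials over a field.

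First, fix a root $\alpha$ of $f$ in an algebraic closure of $\F_q$. Irreducibility of $f$ of degree $m$ gives $\F_q(\alpha)=\F_{q^m}$, and by definition of the exponent, $\alpha$ has order $e$ in $\F_{q^m}^*$. Any root $\beta$ of $f(x^n)$ satisfies that $\beta^n$ is a root of $f$; after replacing $\alpha$ by a Frobenius conjugate (which has the same order $e$), we may take $\beta^n=\alpha$. Since $\alpha=\beta^n\in\F_q(\beta)$, the tower $\F_q\subseteq\F_{q^m}=\F_q(\alpha)\subseteq\F_q(\beta)$ yields
$$[\F_q(\beta):\F_q]=m\cdot[\F_{q^m}(\beta):\F_{q^m}].$$
As $\deg f(x^n)=mn$, irreducibility of $f(x^n)$ over $\F_q$ is equivalent to $[\F_{q^m}(\beta):\F_{q^m}]=n$, i.e.\ to irreducibility of the binomial $x^n-\alpha$ over $\F_{q^m}$.

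Next, I would invoke the Serret--Capelli theorem: for $\alpha\in K^*$ and $n\ge 2$, $x^n-\alpha$ is irreducible over $K$ if and only if (i) $\alpha\notin K^p$ for every prime $p\mid n$, and (ii) if $4\mid n$, also $-4\alpha\notin K^4$. Applied with $K=\F_{q^m}$: in the cyclic group $\F_{q^m}^*$ of order $q^m-1$, an element of order $e$ is a $p$-th power precisely when $p\nmid q^m-1$ or $e\mid (q^m-1)/p$. A short $p$-adic valuation argument then shows that condition (i) is equivalent to ``$v_p(e)=v_p(q^m-1)$ for every prime $p\mid n$,'' and this conjunction is precisely $\rad(n)\mid e$ together with $\gcd(n,(q^m-1)/e)=1$, which are conditions (1) and (2). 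For the supplementary clause (ii) with $4\mid n$, the identity $(1+i)^4=-4$ shows that when $4\mid q^m-1$, $-4$ is a $4$-th power in $\F_{q^m}$, so (ii) follows from (i) at $p=2$; in the complementary case $v_2(q^m-1)=1$, the squares and $4$-th powers of $\F_{q^m}^*$ coincide and one checks that $-4\alpha$ is then forced to be a $4$-th power, making $f(x^n)$ reducible. This yields the necessity and sufficiency of condition (3).

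For the concluding claim, $\deg f(x^n)=mn$ is immediate from the substitution. The exponent of the irreducible polynomial $f(x^n)$ equals $\ord(\beta)$; writing $d:=\ord(\beta)$, the identity $\ord(\beta^n)=d/\gcd(d,n)=e$ gives $d=e\cdot\gcd(d,n)$ which divides $en$, and a direct verification under (1)--(3) forces $\gcd(d,n)=n$, so $d=en$.

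I expect the main obstacle to be the Serret--Capelli theorem itself, in particular the sufficiency of (ii) in the case $4\mid n$; the standard proof proceeds by induction on the prime factorization of $n$ and exploits a Vahlen-type factorization of $x^4+4c^4$. The reduction to binomials in the first paragraph and the combinatorial translation of the conditions in the second are comparatively routine once that classical criterion is available.
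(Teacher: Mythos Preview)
The paper does not prove this theorem: it is quoted from Lidl--Niederreiter as one of the ``results that will be useful in the following section'' and is explicitly stated \emph{without proof}. So there is no proof in the paper to compare against.

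Your argument is the standard one and is correct. The reduction to the binomial $x^n-\alpha$ over $\F_{q^m}$ via the tower $\F_q\subset\F_{q^m}\subset\F_q(\beta)$ is clean, and the translation of the Serret--Capelli conditions into (1)--(2)--(3) is right. One small imprecision: you write that condition (i) is equivalent to ``$v_p(e)=v_p(q^m-1)$ for every prime $p\mid n$,'' but strictly you need $v_p(e)=v_p(q^m-1)\ge 1$, since if $p\nmid q^m-1$ then $\alpha$ is automatically a $p$-th power even though both valuations are zero. Your very next sentence recovers the correct statement anyway, because $\rad(n)\mid e$ encodes exactly the missing $v_p(e)\ge 1$. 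Your exponent computation is also fine: from $d=e\cdot\gcd(d,n)$, a prime $p$ with $v_p(\gcd(d,n))<v_p(n)$ would satisfy $p\mid n$, hence $p\mid e$ by (1), and then $v_p(\gcd(eg,n))>v_p(g)$ gives a contradiction; in fact only condition (1) is needed here.
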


\begin{theorem}[\cite{BGO} Theorem 1]\label{muitoimp}
Let $\mathbb{F}_{q}$ be a finite field and  $n\in\mathbb{N}$ such that 
\begin{enumerate}[a)]
\item $q\equiv 1\pmod 4$ or $8\nmid n$,
\item $\rad(n)$ divides $q-1$.
\end{enumerate}
Then every irreducible factor of $x^{n}-1$ is of the form  $x^{t}-a$, where  $t$ divides $\frac{n}{\gcd(n,q-1)}$, $a\in\mathbb{F}_{q}$ and $ord_{q}(a)$ divides $\gcd(\frac{n}{t},q-1)$.
\end{theorem}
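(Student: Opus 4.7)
The strategy is to exploit $\rad(n) \mid q-1$ to split $x^n - 1$ into binomials, then factor each binomial iteratively until reaching irreducibles. Set $s = \gcd(n, q-1)$ and $t = n/s$. Since $s \mid q - 1$, all $s$-th roots of unity lie in $\F_q^*$, so
$$x^n - 1 = (x^t)^s - 1 = \prod_{\omega^s = 1}\bigl(x^t - \omega\bigr),$$
which reduces the problem to factoring each $x^t - a$ with $a \in \F_q^*$, say of order $e := \ord_q(a)$ dividing $s$.

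For each $x^t - a$, I would argue by strong induction on $t$. If $\gcd(t, (q-1)/e) > 1$, pick a prime $p$ dividing both; then $e \mid (q-1)/p$, so $a$ is a $p$-th power in $\F_q^*$, say $a = b^p$. Since $p \mid q-1$ (using $\rad(n) \mid q - 1$), a primitive $p$-th root of unity $\omega_p$ lies in $\F_q$, and
$$x^t - a = \prod_{j=0}^{p-1}\bigl(x^{t/p} - b\omega_p^j\bigr),$$
so the inductive hypothesis applies to each factor. If instead $\gcd(t,(q-1)/e) = 1$, I claim $x^t - a$ is already irreducible via Theorem~\ref{irreducible} applied to $f(x) = x - a$ (degree $1$, exponent $e$). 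Condition (2) is precisely our assumption. Condition (1), $\rad(t) \mid e$, is automatic: any prime $p \mid t$ divides $q-1$ by hypothesis and must divide $e$, for otherwise $\gcd(p,e) = 1$ with $p,e \mid q-1$ would force $p \mid (q-1)/e$, contradicting condition (2). Condition (3) is where the dichotomy ``$q \equiv 1 \pmod 4$ or $8 \nmid n$'' is essential: it is vacuous when $q \equiv 1 \pmod 4$, while $q \equiv 3 \pmod 4$ forces $v_2(s) = \min(v_2(n),1)$ and, combined with $8 \nmid n$, yields $v_2(t) \leq 1$. Since every factorization step weakly decreases the $2$-adic valuation of the exponent, each inductive exponent $\tau$ satisfies $4 \nmid \tau$, so (3) holds vacuously.

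To confirm the form of the final irreducible factors $x^\tau - c$: the divisibility $\tau \mid t$ is immediate from the induction, and $\ord_q(c) \mid q - 1$ holds since $c \in \F_q^*$. The key inductive invariant is that the product (exponent)$\,\cdot\,$(order of constant) always divides $n$: initially $t \cdot e \mid t \cdot s = n$, and in a step $(t', e') \mapsto (t'/p,\, e'')$ the identity $(b\omega_p^j)^{pe'} = (b^p)^{e'} = a^{e'} = 1$ gives $e'' \mid pe'$, hence $(t'/p)\cdot e'' \mid t' e'$. Therefore $\tau \cdot \ord_q(c) \mid n$, yielding $\ord_q(c) \mid \gcd(n/\tau, q-1)$, as required. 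The main obstacle I anticipate is the careful bookkeeping of condition (3) of Theorem~\ref{irreducible} alongside this multiplicative invariant as the induction peels off primes from $t$, rather than in any individual factorization step.
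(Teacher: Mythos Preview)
The paper does not prove this statement: it is quoted from \cite{BGO} and introduced with the remark ``we show, without proof, some results that will be useful in the following section.'' So there is no proof in the paper to compare against; I can only assess your argument on its own merits.

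Your argument is correct. The two-stage reduction (first split off the $\gcd(n,q-1)$ part using roots of unity in $\F_q$, then iteratively extract $p$-th roots whenever the constant is a $p$-th power) is exactly the natural approach, and your verification of the three hypotheses of Theorem~\ref{irreducible} at the terminal step is clean. The multiplicative invariant $\tau\cdot\ord_q(c)\mid n$ is the right bookkeeping device to recover the divisibility $\ord_q(c)\mid\gcd(n/\tau,q-1)$ at the end.

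One small omission: your treatment of condition~(3) only addresses $q\equiv 1\pmod 4$ and $q\equiv 3\pmod 4$, tacitly assuming odd characteristic. In even characteristic $q-1$ is odd, so $\rad(n)\mid q-1$ forces $n$ odd, hence $t$ odd and condition~(3) is vacuous. This is trivial to insert but should be mentioned for completeness, since the theorem as stated does not restrict the characteristic.
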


\begin{theorem}[\cite{BGO} Theorem 2]\label{copolirr}
Let  $q$ and $n$ such that $q\equiv 3\pmod 4$, $8\mid n$ and $rad(n)\mid (q-1)$, then  every irreducible factor of  $x^{n}-1$ in  $\mathbb{F}_{q}[x]$ is one of the following types
\begin{enumerate}[i)]
\item $x^{t}-a$, where  $a\in\mathbb{F}_{q}$, $t$ divides $\frac {n}{\gcd(n,q^2-1)}$  and $ord_{q}a$ divides $\gcd(\frac{n}{t},q-1)$,
\item $x^{2t}-(b+b^{q})x^{t}+b^{q+1}$, where  $b\in\mathbb{F}_{q^{2}}\setminus\mathbb{F}_{q}$,  $t$ divides $\frac {n}{\gcd(n,q^2-1)}$ and $ord_{q^{2}}b$ divides $\gcd(\frac{n}{t},q^{2}-1)$.
\end{enumerate}
\end{theorem}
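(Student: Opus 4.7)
The plan is to analyze the irreducible factors of $x^n-1$ over $\F_q$ via the Frobenius orbits on the group $\mu_n$ of $n$-th roots of unity in $\overline{\F_q}$; this group is cyclic of order $n$ because $\rad(n)\mid q-1$ forces $p\nmid n$. Each irreducible factor corresponds to an orbit $\{\zeta,\zeta^q,\ldots,\zeta^{q^{s-1}}\}$, where $s$ is the $\F_q$-degree of $\zeta$. The decisive structural observation is that $\mu_n\cap\F_{q^2}^*$ is the unique subgroup of $\mu_n$ of order $\gcd(n,q^2-1)$; consequently, for each $\zeta\in\mu_n$ of order $d$, the integer $t:=d/\gcd(d,q^2-1)$ is the smallest positive exponent such that $b:=\zeta^t\in\F_{q^2}^*$, and an elementary $p$-adic comparison shows that $t$ divides $n/\gcd(n,q^2-1)$.

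I would then split into two cases according to whether $b\in\F_q^*$ or $b\in\F_{q^2}\setminus\F_q$. In the first case, every Frobenius conjugate of $\zeta$ lies in the coset $\zeta\mu_t$ (since $(\zeta^{q^i})^t=b^{q^i}=b$), so the minimal polynomial of $\zeta$ divides $x^t-b$. To promote this to equality I would apply Theorem~\ref{irreducible} with $f(y)=y-b$ and substitution $y=x^t$: the arithmetic hypotheses $\rad(t)\mid\ord_q(b)$ and $\gcd(t,(q-1)/\ord_q(b))=1$ reduce to a direct prime-by-prime valuation check using $\ord_q(b)=d/t$, and the third hypothesis ``$4\mid t\Rightarrow 4\mid q-1$'' is precisely where the assumption $q\equiv 3\pmod 4$ enters, because $b\in\F_q$ forces $v_2(d/t)\le 1$ while $4\mid t$ would force $v_2(d)\ge v_2(q^2-1)+2\ge 5$, a contradiction. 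This produces a factor of type~(i) with $a=b$. In the second case, Frobenius sends $\zeta$ into the distinct coset $\zeta^q\mu_t$ (because $b^q\ne b$) and the orbit fills both $\zeta\mu_t$ and $\zeta^q\mu_t$; the minimal polynomial is then $(x^t-b)(x^t-b^q)=x^{2t}-(b+b^q)x^t+b^{q+1}$, and Galois descent along $\F_{q^2}/\F_q$ reduces its irreducibility to that of $x^t-b$ over $\F_{q^2}$, where Theorem~\ref{irreducible} applies cleanly because the condition ``$4\mid t\Rightarrow 4\mid q^2-1$'' is automatic for odd $q$. The divisibility constraints $\ord_q(a)\mid\gcd(n/t,q-1)$ and $\ord_{q^2}(b)\mid\gcd(n/t,q^2-1)$ follow at once from $\ord(a)=\ord(b)=d/t$, $d\mid n$, and the containments $d/t\mid q-1$ (respectively $d/t\mid q^2-1$).

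The main obstacle is the careful 2-adic bookkeeping forced by the combination $q\equiv 3\pmod 4$ and $8\mid n$: the relevant index jumps from $n/\gcd(n,q-1)$ in the parallel Theorem~\ref{muitoimp} to $n/\gcd(n,q^2-1)$ here, and one must track via the lifting-the-exponent identity $v_2(q^2-1)=v_2(q-1)+v_2(q+1)$ how the 2-parts of $d$, $t$, $\ord_d(q)$ and $q\pm 1$ interact so that no Frobenius orbit is miscounted and no spurious factor type appears. Once this bookkeeping is in place, the dichotomy above exhausts every Frobenius orbit on $\mu_n$ and hence every irreducible factor of $x^n-1$.
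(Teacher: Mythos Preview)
The paper does not prove this theorem: it is quoted verbatim from \cite{BGO} (Theorem~2 there) and is explicitly listed among the results that the authors ``show, without proof'' for later use. Consequently there is no in-paper argument to compare your proposal against.

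That said, your outline is mathematically sound and is in fact close in spirit to the argument in \cite{BGO}. Your key construction---for $\zeta\in\mu_n$ of order $d$, setting $t=d/\gcd(d,q^2-1)$ so that $b=\zeta^t$ is the first power of $\zeta$ landing in $\F_{q^2}^*$, and then distinguishing $b\in\F_q$ from $b\in\F_{q^2}\setminus\F_q$---is exactly the right dichotomy. The verifications of the three hypotheses of Theorem~\ref{irreducible} go through as you indicate; in particular your $2$-adic check in case~(i) can be sharpened: if $b\in\F_q$ then already $2\mid t$ is impossible, since $2\mid t$ forces $v_2(d/t)=v_2(q^2-1)\ge 3$ while $b\in\F_q$ forces $v_2(d/t)\le v_2(q-1)=1$. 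So the ``$4\mid t\Rightarrow 4\mid q-1$'' hypothesis is vacuous in that case, and the whole argument is a touch cleaner than you suggest. Your Galois-descent step in case~(ii)---reducing irreducibility of $(x^t-b)(x^t-b^q)$ over $\F_q$ to irreducibility of $x^t-b$ over $\F_{q^2}$ together with $b\notin\F_q$---is correct; the index computation $[\F_q(\zeta):\F_q]=2t$ follows because $[\F_{q^2}(\zeta):\F_{q^2}]=t$ and $\zeta\in\F_{q^t}$ would force $b\in\F_{q^{\gcd(t,2)}}=\F_q$ (when $t$ is odd) or $\F_{q^2}(\zeta)=\F_{q^t}$ of the wrong degree (when $t$ is even).
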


\section{Factorization of Dickson polynomials of the first kind }
Throughout this section,   $n$ represents a positive integer such that $\rad(n)$ divides $q-1$, i.e.,  every prime divisor of $n$ also divides $q-1$.  In the case that $a$ is an square in $\F_q$,  we can see that, by a linear change of  the variable,  the factorization of $D_n(x,a)$ and $D_n(x)$ are equivalent. In addition, the factorization of $D_n(x)$ also depend on the class of $q$  modulo $4$.  Thus, our result will divide in three cases.  

\begin{theorem}\label{teofator1}
Let  $a\in\mathbb{F}_q^*$ be a square  in $\mathbb{F}_q$ and  assume that  either  $q\equiv 1\pmod 4$ or $n$ is an odd positive integer. If $\rad(n)|(q-1)$,  then every irreducible factor of $D_{n}(x;a)$ in  $\mathbb{F}_q$  is of the form  $D_{t}(x,a)-b^{t}(\alpha+\alpha^{-1})$, where  $b^2=a$, $\alpha\in\mathbb{F}_{q}^*$ and $t$  is a divisor of $\frac{4n}{\gcd(4n,q-1)}$,  such that the following conditions are satisfied
\begin{enumerate}[(i)]
\item $\alpha^{\frac{2n}{t}}=-1$,
\item $\rad(t)\mid ord_{q}(\alpha)$,
\item $\gcd \left(t,\frac{q-1}{ord_{q}(\alpha)}\right)=1$.
\end{enumerate} 
\end{theorem}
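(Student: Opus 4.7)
The plan is to invoke the correspondence $(\Phi_a,\Psi_a)$ of Theorem \ref{teo3} together with Corollary \ref{lema4}, which gives $\Phi_a(D_n(x,a))=x^{2n}+a^n$. Since $\Phi_a$ and $\Psi_a$ are mutually inverse multiplicative bijections between monic polynomials of degree $t$ and $a$-self-reciprocal monic polynomials of degree $2t$, the irreducible factors of $D_n(x,a)$ are images under $\Psi_a$ of $a$-self-reciprocal factors of $x^{2n}+a^n$: either irreducible self-reciprocal factors, or products of a non-self-reciprocal irreducible factor with its $a$-reciprocal (Theorem \ref{teo3}(c)). Describing the irreducible factorization of $D_n(x,a)$ therefore reduces to describing that of $x^{2n}+a^n$.

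To factor $x^{2n}+a^n$, I would substitute $x=by$ with $b^2=a$, giving $x^{2n}+a^n=b^{2n}(y^{2n}+1)$ and reducing to the factorization of $y^{2n}+1$. Since $y^{2n}+1$ divides $y^{4n}-1$ and is coprime to $y^{2n}-1$ in odd characteristic, its irreducible factors are precisely those factors of $y^{4n}-1$ whose roots satisfy $y^{2n}=-1$. Our hypotheses give $\rad(4n)\mid q-1$ (adding the prime $2$ to $\rad(n)$, which already divides $q-1$ in odd characteristic) and the parity condition ``$q\equiv1\pmod 4$ or $8\nmid 4n$'' required to apply Theorem \ref{muitoimp} to $y^{4n}-1$. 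Hence every irreducible factor of $y^{4n}-1$ has the form $y^t-\alpha$ with $\alpha\in\F_q^*$ and $t\mid\frac{4n}{\gcd(4n,q-1)}$. A $2$-adic valuation analysis using the case split in the hypothesis shows $t\mid 2n$, so the divisibility $y^t-\alpha\mid y^{2n}+1$ translates into condition (i) $\alpha^{2n/t}=-1$; conditions (ii) and (iii) are the irreducibility criteria for $y^t-\alpha$ from Theorem \ref{irreducible}, the auxiliary ``$4\mid t\Rightarrow 4\mid q-1$'' being automatic in both cases of the hypothesis. Reversing the substitution, the irreducible factors of $x^{2n}+a^n$ are exactly the polynomials $x^t-b^t\alpha$ with the same parameters.

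A direct calculation gives $(x^t-b^t\alpha)_a^*=x^t-b^t\alpha^{-1}$, so irreducible factors of $x^{2n}+a^n$ pair off as $\{x^t-b^t\alpha,\ x^t-b^t\alpha^{-1}\}$. Their product equals $x^{2t}-b^t(\alpha+\alpha^{-1})x^t+a^t$, and combining $\Phi_a(D_t(x,a))=x^{2t}+a^t$ with the expansion of $\Phi_a$ one checks immediately that
\[
\Phi_a\bigl(D_t(x,a)-b^t(\alpha+\alpha^{-1})\bigr)=(x^t-b^t\alpha)(x^t-b^t\alpha^{-1}).
\]
Applying $\Psi_a$ to each such pair and invoking Theorem \ref{teo3}(c) yields the irreducible factor $D_t(x,a)-b^t(\alpha+\alpha^{-1})$ of $D_n(x,a)$, which is the stated parametrization. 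The main technical difficulty I expect is the $2$-adic bookkeeping: verifying that $t\mid 2n$ so that condition (i) is meaningful, discharging the auxiliary condition on $4\mid t$, and handling the potentially degenerate self-paired case $\alpha=\alpha^{-1}=-1$, where the product collapses to $(x^t+b^t)^2$ and the correct irreducible factor $D_{t/2}(x,a)$ must be recovered using the Dickson identity $D_{t/2}(x,a)^2=D_t(x,a)+2b^t$ derived from Lemma \ref{proptipo1}(i).
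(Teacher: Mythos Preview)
Your approach is essentially the same as the paper's. The only cosmetic difference is that the paper first applies Lemma~\ref{proptipo1}(iv) to reduce to $a=1$ and then works with $\Phi_1,\Psi_1$ on $y^{2n}+1$, whereas you keep $\Phi_a,\Psi_a$ and only substitute $x=by$ when you need to invoke Theorem~\ref{muitoimp}; both routes produce the same binomial factors and the same pairing under reciprocity, and both finish by applying $\Psi$ to $(y^t-\alpha)(y^t-\alpha^{-1})$.

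One remark on your anticipated ``degenerate'' case $\alpha=\alpha^{-1}$. The paper disposes of it not by extracting a $D_{t/2}$ factor but by arguing that an irreducible $y^t\pm1$ forces $t=1$, and then $y\pm1\mid y^{2n}+1$ yields $2=0$, contradicting odd characteristic. Your alternative via the identity $D_{t/2}(x,a)^2=D_t(x,a)+2a^{t/2}$ is correct in spirit, but note that in the subcase $q\equiv 3\pmod 4$ with $n$ odd the self-reciprocal irreducible factor $y^2+1$ of $y^{2n}+1$ genuinely occurs and corresponds via $\Psi_1$ to the linear factor $x$ of $D_n(x,a)$; this is the only place where the self-paired case survives, and it is worth recording explicitly rather than folding into the general $D_t-b^t(\alpha+\alpha^{-1})$ shape (which would require $\alpha^2=-1\in\mathbb F_q$).
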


\begin{proof}
From  Lemma   \ref{proptipo1} item {(iv)}, we know that   $D_{n}(x,a)=b^{n}D_{n}(b^{-1}x,1)$ for $n\geq 0$ and $b\neq 0$. 
Then, the factorization of $D_{n}(x,a)$ can be obtained from the factorization of $D_{n}(y)$ where $y=b^{-1}x$.
In addition, by Corollary \ref{lema4}, we know that $\Phi_1(D_n(y))= y^{2n}+1$ and since $y^{4n}-1=(y^{2n}-1)(y^{2n}+1)$, if we find the irreducible factor of $y^{4n}-1$, in particular we obtain the irreducible factors of $y^{2n}+1$. 

It follows from  Theorem  \ref{muitoimp}  that every irreducible factor of $y^{4n}-1$  is of the form $y^{t}-\alpha$, where $t$ divides $ \frac{4n}{gcd(4n,q-1)}$ and $\alpha$ is an appropriate element of $\F_q$. 
In fact,  $y^{t}-\alpha$ divides $y^{2n}+1$  is equivalent to  $y^{2n}\equiv -1\pmod {y^{t}-\alpha}$, and therefore $y^{2n}\equiv \left(y^{t}\right)^{\frac{2n}{t}}\equiv -1\pmod {y^{t}-\alpha}$.  Consequently $\alpha^{\frac{2n}{t}}=-1$.

Let suppose that $f(y)$ is an irreducible factor of $D_{n}(y)$ and  $h(y)$ be the image of $f(y)$ by the map $\Phi_{1}$.  Hence, $h(x)$ is a factor of $x^{2n}+1$, not necessarily irreducible in $\F_q[y]$.  There  exists an irreducible factor of the form $x^t-\alpha$  that divides $h(x)$. Since $h(x)$ is self-reciprocal polynomial, it follows that the reciprocal $x^t-\alpha^{-1}$ also divides $h(x)$.  

At this point, we have two cases to consider

\begin{enumerate}[(a)]
\item If $\alpha\neq\alpha^{-1}$, then $\left(y^{t}-\alpha\right)\left(y^{t}-\alpha^{-1}\right)\mid h(y)$. Since
$$\left(y^{t}-\alpha\right)\left(y^{t}-\alpha^{-1}\right)=
y^{t}\Bigl(y^{t}+\frac{1}{y^{t}}-(\alpha+\alpha^{-1})\Bigr)=
y^{t}\Bigl(D_{t}\Bigl(y+\frac{1}{y}\Bigr)-(\alpha+\alpha^{-1})\Bigr),$$
we get that  $\Psi_{1}((y^{t}-\alpha)(y^{t}-\alpha^{-1}))= D_{t}(y)-(\alpha+\alpha^{-1})$ divides 
$f(y)$.
Now, using the fact that $f(y)$ is a monic  irreducible  polynomial, we conclude that $f(y)=D_{t}\left(y\right)-\left(\alpha+\alpha^{-1}\right)$.  Finally, rewrite this identity using the original variable we obtain
$f(b^{-1}x)=D_{t}\left(b^{-1}x,1\right)-\left(\alpha+\alpha^{-1}\right)=b^{-t}D_{t}\left(x,a\right)-\left(\alpha+\alpha^{-1}\right)$ and therefore
$$b^{t}f(b^{-1}x)=D_{t}\left(x,a\right)-b^{t}\left(\alpha+\alpha^{-1}\right)$$
is a monic irreducible factor of $D_{n}\left(x,a\right)$ in $\mathbb{F}_q$.

\item In the case when  $\alpha=\alpha^{-1}$, we have that  $\alpha=\pm 1$ and  $y^{t}-\alpha=y^{t}\pm 1$. It is clear that this polynomial is irreducible if $t=1$. In the case that $t\ne 1$, we have that   $y^t- 1$ is always reducible and  $y^t+1$  is reducible if $t$ has an odd prime divisor.     Lastly, if $t$ is a power of $2$, since $q\equiv 1\pmod 4$, we have that $-1$ is an square in $\F_q$ and  hence $y^t+1$ is also reducible.  
In any case we conclude that  $y\pm 1$ divides $y^{2n}+1$, thus $y^{2n}+1\equiv (\mp 1)^{2n}+1 \equiv 2 \equiv 0\pmod {y\pm 1}$ implies that $char(\mathbb{F}_q)=2$, which is a contradiction. \qed
\end{enumerate}
\end{proof}

In the previous  theorem, we consider the case when $a\in\mathbb{F}_q$ is an square in $\mathbb{F}_q$ and  either  $q\equiv 1\pmod 4$ or $n$ is an odd integer.  In the following theorems, we use this result  in order to  understand the complementary cases  of this conditions.

\begin{theorem}\label{teoimp31}
Let  $a$ be an square in $\mathbb{F}_q$, $q\equiv 3\pmod 4$ and  $n$ an even positive integer such that $\rad(n)|(q-1)$.
Let $b\in  \F_q$ such that $b^2=a$, $\alpha\in\mathbb{F}_{q^2}^*$ and $t$ be a divisor of  $\frac{4n}{\gcd(4n, q^2-1)}$, such that  $t$ and $\alpha$ satisfy  the conditions \textit{(i)}, \textit{(ii)} and  \textit{(iii)} of the Theorem \ref{teofator1}  in the finite field $\F_{q^2}$.
Then every irreducible factor of $D_{n}(x,a)$ in $\mathbb{F}_q$ is of the following types
\begin{enumerate}[(a)]
\item $D_{t}(x,a)-b^{t}(\alpha+\alpha^{-1})$  in the case that $\alpha\in\mathbb{F}_{q}^*$ or $\alpha^{q+1}=1$,
\item $\left(D_{t}(x,a)-b^{t}(\alpha+\alpha^{-1})\right)\left(D_{t}(x,a)-b^{t}(\alpha^{q}+\alpha^{-q})\right)$   otherwise.
\end{enumerate} 
\end{theorem}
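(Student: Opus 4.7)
The plan is to deduce this factorization from Theorem \ref{teofator1} applied over the quadratic extension $\F_{q^2}$, rather than redoing the entire $\Phi_a$/$\Psi_a$ analysis. Since $q \equiv 3 \pmod{4}$ forces $q^2 \equiv 1 \pmod{4}$, and $\rad(n) \mid (q-1) \mid (q^2-1)$, every hypothesis of Theorem \ref{teofator1} is satisfied with $q$ replaced by $q^2$. Moreover $a$ is a square in $\F_q$ and hence in $\F_{q^2}$, with the same square root $b \in \F_q$. I would first invoke Theorem \ref{teofator1} over $\F_{q^2}$ to list the complete $\F_{q^2}$-irreducible factorization of $D_n(x,a)$ as the collection of polynomials $g_\alpha(x) := D_t(x,a) - b^t(\alpha + \alpha^{-1})$, where $\alpha \in \F_{q^2}^*$ satisfies conditions (i)--(iii) and $t \mid 4n/\gcd(4n,q^2-1)$. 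The $\F_q$-irreducible factorization is then recovered by grouping these $\F_{q^2}$-factors into orbits under the nontrivial element $\sigma$ of $\mathrm{Gal}(\F_{q^2}/\F_q)$.

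Because $b \in \F_q$, the Frobenius acts on the list by $\sigma(g_\alpha) = g_{\alpha^q}$, so the orbit of $g_\alpha$ has size $1$ precisely when $\alpha + \alpha^{-1} = \alpha^q + \alpha^{-q}$. Rearranging this identity as $\alpha - \alpha^q = (\alpha - \alpha^q)/\alpha^{q+1}$ shows it is equivalent to $\alpha \in \F_q$ or $\alpha^{q+1} = 1$, which is precisely the dichotomy between cases (a) and (b) of the statement. In case (a) the Galois-fixed factor $g_\alpha$ already lies in $\F_q[x]$, and since it is irreducible over the larger field $\F_{q^2}$ it is \emph{a fortiori} irreducible over $\F_q$. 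In case (b), the two-element orbit $\{g_\alpha, g_{\alpha^q}\}$ yields the product $g_\alpha \cdot g_{\alpha^q} \in \F_q[x]$, which is $\F_q$-irreducible: any proper $\F_q$-factor of it would, upon base change to $\F_{q^2}$, have to be (associated to) one of the two $\F_{q^2}$-irreducibles $g_\alpha$ or $g_{\alpha^q}$, but by hypothesis neither of these lies in $\F_q[x]$.

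Exhaustiveness then follows from the standard fact that every $\F_q$-irreducible factor of $D_n(x,a)$ factors over $\F_{q^2}$ into a Galois orbit of $\F_{q^2}$-irreducible factors, each of which is of the form $g_\alpha$ by Theorem \ref{teofator1}; the dichotomy above enumerates exactly these orbits. The main point requiring care is the equivalence $\alpha + \alpha^{-1} \in \F_q \iff \alpha \in \F_q \text{ or } \alpha^{q+1}=1$, which is the key algebraic identity distinguishing the two cases; the rest of the argument is formal descent from $\F_{q^2}$ to $\F_q$. A conceptual reason this shortcut works is that the obstruction in Theorem \ref{teofator1} (needing $q \equiv 1 \pmod 4$ when $n$ is even) disappears upon passing to $\F_{q^2}$, so the entire combinatorial content of Theorem \ref{copolirr} is encoded here in a single Galois descent step.
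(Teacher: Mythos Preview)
Your proposal is correct and follows essentially the same approach as the paper: both apply Theorem \ref{teofator1} over $\F_{q^2}$ (using $q^2\equiv 1\pmod 4$) to obtain the $\F_{q^2}$-irreducible factors $D_t(x,a)-b^t(\alpha+\alpha^{-1})$, then descend to $\F_q$ via the Frobenius action, with the key computation being that $\alpha+\alpha^{-1}\in\F_q$ iff $(\alpha^{q-1}-1)(\alpha^{q+1}-1)=0$. Your write-up is somewhat more explicit about the Galois-orbit language and the irreducibility argument in case (b), but the substance is identical to the paper's proof.
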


\begin{proof}
We observe that every  irreducible factor of $D_n(x,a)$ in $\F_q[x]$ is also a factor, not necessarily irreducible, in $\F_{q^2}[x]$. Thus, at first time we  consider the Dickson polynomial $D_{n}(x,a)$  as a polynomial  in $\mathbb{F}_{q^2}[x]$.  
Since  $q^2\equiv 1\pmod 4$, by Theorem \ref{teofator1}, every irreducible factor of $D_n(x,a)$ in $\mathbb{F}_{q^2}[x]$ is of the form  $D_{t}\left(x,a\right)-b^{t}\left(\alpha+\alpha^{-1}\right)\in \mathbb{F}_{q^2}[x]$.

Some of these factors are also in $\F_q[x]$, but that happens when  $\alpha+\alpha^{-1}\in \mathbb{F}_{q}$. This condition is equivalent to $\alpha+\alpha^{-1}=\left(\alpha+\alpha^{-1}\right)^{q}$ and that equation is equivalent to  $(\alpha^{q+1}-1)(\alpha^{q-1}-1)=0$. Thus $\alpha\in\mathbb{F}_{q}^*$ or $\alpha^{q+1}=1$. 

In the case when none of these conditions are satisfied, $D_{t}\left(x,a\right)-b^{t}\left(\alpha+\alpha^{-1}\right)\in\mathbb{F}_{q^2}[x]$ is a factor of $D_{n}(x,a)$, that is not in $\F_q[x]$. From the fact that the coefficient of $D_n(x,a)$ is invariant by the  Frobenius Automorphism
\begin{eqnarray*}
\tau:\mathbb{F}_{q^2} & \rightarrow & \mathbb{F}_{q^2}\\
\beta & \mapsto & \beta^{q},
\end{eqnarray*}
we conclude that  $D_{t}\left(x,a\right)-b^{t}\left(\alpha^{q}+\alpha^{-q}\right)$ is also an irreducible factor of  $D_{n}(x,a)$. In addition, from the fact that  $\alpha+\alpha^{-1}\neq\alpha^{q}+\alpha^{-q}$, we concluded that $\left(D_{t}(x,a)-b^{t}(\alpha+\alpha^{-1})\right)\left(D_{t}(x,a)-b^{t}(\alpha^{q}+\alpha^{-q})\right)$ is a factor of  $D_{n}(x,a)$.

Lastly,  the coefficients of  $\left(D_{t}(x,a)-b^{t}(\alpha+\alpha^{-1})\right)\left(D_{t}(x,a)-b^{t}(\alpha^{q}+\alpha^{-q})\right)$ are invariant by $\tau$, so $$\left(D_{t}(x,a)-b^{t}(\alpha+\alpha^{-1})\right)\left(D_{t}(x,a)-b^{t}(\alpha^{q}+\alpha^{-q})\right)\in \F_q[x]$$
 is an irreducible  factor of  $D_{n}(x,a)$   in the polynomial ring $\F_q[x]$.\qed
\end{proof}


\begin{theorem}\label{teoimpult}
Let  $a\in\mathbb{F}_q$ be a non-square in $\mathbb{F}_q$, $b\in\mathbb{F}_{q^2}\setminus\mathbb{F}_{q}$ such that $b^2=a$ and $n$ be a positive integer such that $\rad(n)|(q-1)$. Let 
 $\alpha\in\mathbb{F}_{q^2}^*$ and  $t$ be a divisor of $\frac{4n}{\gcd(4n, q^2-1)}$ satisfied the conditions \textit{(i)}, \textit{(ii)} and \textit{(iii)} of Theorem \ref{teofator1} in the field $\mathbb{F}_{q^{2}}$. 
Then every irreducible factor of $D_{n}(x,a)$ in $\mathbb{F}_q$ is one of the following forms
\begin{enumerate}[(a)]
\item $D_{t}(x,a)-b^{t}(\alpha+\alpha^{-1})$ in the case that  $t$ is even and either $\alpha\in\mathbb{F}_{q}^*$  or $\alpha^{q+1}=1$,  or $t$ is odd  and either $\alpha^{q-1}=-1$ or $\alpha^{q+1}=-1$,
\item $\left(D_{t}(x,a)-b^{t}(\alpha+\alpha^{-1})\right)\left(D_{t}(x,a)-b^{qt}(\alpha^{q}+\alpha^{-q})\right)$  otherwise
\end{enumerate}
\end{theorem}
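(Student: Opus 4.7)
The plan is to adapt the argument of Theorem \ref{teoimp31} to the present situation, the new wrinkle being that $b$ itself no longer lies in $\F_q$: since $b\in \F_{q^2}\setminus \F_q$ and $b^2=a\in\F_q$, the Frobenius $\tau\colon \beta\mapsto \beta^q$ acts on $b$ by $b^q=-b$. So $b^{qt}=(-1)^tb^t$, and the parity of $t$ will enter the analysis in an essential way.

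First I would pass to $\F_{q^2}[x]$, where every irreducible factor of $D_n(x,a)$ in $\F_q[x]$ must split. Since $q^2\equiv 1\pmod 4$ and $a=b^2$ is a square in $\F_{q^2}$, Theorem \ref{teofator1} applies inside $\F_{q^2}[x]$ and says every irreducible factor there has the shape
\[
f(x)=D_t(x,a)-b^t(\alpha+\alpha^{-1}),
\]
with $\alpha\in\F_{q^2}^*$ and $t\mid 4n/\gcd(4n,q^2-1)$ satisfying conditions (i)--(iii) of that theorem. Because $D_t(x,a)$ already has coefficients in $\F_q$, the factor $f(x)$ lies in $\F_q[x]$ if and only if the constant-type term $b^t(\alpha+\alpha^{-1})$ is fixed by $\tau$, i.e.\ $b^{qt}(\alpha^q+\alpha^{-q})=b^t(\alpha+\alpha^{-1})$.

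Next I would run the case analysis on the parity of $t$. When $t$ is even, $b^{qt}=b^t$ and the condition reduces to $\alpha^q+\alpha^{-q}=\alpha+\alpha^{-1}$; exactly as in the proof of Theorem \ref{teoimp31}, this factors as $(\alpha^{q+1}-1)(\alpha^{q-1}-1)=0$, giving $\alpha\in\F_q^*$ or $\alpha^{q+1}=1$. When $t$ is odd, $b^{qt}=-b^t$ and the condition becomes $\alpha+\alpha^{-1}+\alpha^q+\alpha^{-q}=0$. Multiplying through by $\alpha^{q+1}$ and collecting terms gives
\[
(\alpha+\alpha^q)(\alpha^{q+1}+1)=0,
\]
i.e.\ $\alpha^{q-1}=-1$ or $\alpha^{q+1}=-1$. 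These are precisely the conditions listed in case (a); in each of them $f(x)$ already lies in $\F_q[x]$ and, being irreducible over $\F_{q^2}$, is a fortiori irreducible over $\F_q$.

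Finally, when none of the above conditions holds, $f(x)\notin \F_q[x]$ and $\tau(f)\neq f$. Applying $\tau$ to the coefficients of $f$ produces the conjugate factor
\[
\tau(f)(x)=D_t(x,a)-b^{qt}(\alpha^q+\alpha^{-q}),
\]
which is another irreducible factor of $D_n(x,a)$ over $\F_{q^2}$. Since $f$ and $\tau(f)$ are distinct irreducibles, their product is $\tau$-invariant, hence lies in $\F_q[x]$; a standard Galois descent argument (as at the end of the proof of Theorem \ref{teoimp31}) then shows this product is irreducible in $\F_q[x]$ and gives case (b). The main obstacle I anticipate is bookkeeping the appearance of the sign $(-1)^t$ arising from $b^q=-b$ and verifying the algebraic identity $(\alpha+\alpha^q)(\alpha^{q+1}+1)=\alpha^{q+1}(\alpha+\alpha^{-1}+\alpha^q+\alpha^{-q})$ that produces precisely the two conditions for odd $t$; everything else is a straightforward transcription of the scheme already used in the square case.
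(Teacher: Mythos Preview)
Your proposal is correct and follows essentially the same approach as the paper: pass to $\F_{q^2}$, apply Theorem~\ref{teofator1}, and determine when $b^t(\alpha+\alpha^{-1})$ is Frobenius-invariant using $b^{q-1}=a^{(q-1)/2}=-1$ and a parity split on $t$. The only cosmetic difference is that in the odd-$t$ case the paper multiplies the relation by $\alpha^q$ to obtain the factorization $(\alpha^{q-1}+1)(\alpha^{q+1}+1)=0$, whereas you multiply by $\alpha^{q+1}$ to get $(\alpha+\alpha^q)(\alpha^{q+1}+1)=0$; these yield the same conditions since $\alpha+\alpha^q=0$ iff $\alpha^{q-1}=-1$.
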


\begin{proof}
As in the previous  theorem, we  consider the Dickson polynomial $D_{n}(x,a)$  as a polynomial  in $\mathbb{F}_{q^2}[x]$. 
In any case, we have that $q^2 \equiv 1\pmod 4$ and $a$ is an square in $\mathbb{F}_{q^2}$, i.e.,   $a=b^2$ where $b\in\mathbb{F}_{q^2}\setminus\mathbb{F}_{q}$.  
It follows from  Theorem \ref{teofator1}  that every irreducible factor of $D_n(x,a)$ in $\mathbb{F}_{q^2}[x]$ is of the form  $D_{t}(x,a)-b^{t}(\alpha+\alpha^{-1})\in\mathbb{F}_{q^2}[x]$. 

At this point, we need to determine which factors are in $\mathbb{F}_{q}[x]$. Since $D_{t}(x,a)\in\mathbb{F}_{q}[x]$, we need find conditions in order that $b^{t}\left(\alpha+\alpha^{-1}\right)\in\mathbb{F}_{q}$, which is equivalent to   $b^{t}\left(\alpha+\alpha^{-1}\right)=b^{qt}\left(\alpha+\alpha^{-1}\right)^{q}$. 
This equation can be rewritten as  $b^{t(q-1)}\left(\alpha^{q}+\alpha^{-q}\right)=\alpha+\alpha^{-1}$.

It follows from  $a$  not being an square in $\F_q$ that $b^{q-1}=\left(b^{2}\right)^{\frac{q-1}{2}}=a^{\frac{q-1}{2}}=-1$.
At this point, we have two cases to consider

\begin{enumerate}
\item  When $t$ is even:   $b^{t(q-1)}\left(\alpha^{q}+\alpha^{-q}\right)=\alpha+\alpha^{-1}$ if and only if $\alpha^{q}+\alpha^{-q}=\alpha+\alpha^{-1}$, and so $\alpha\in\mathbb{F}_{q}^*$ or $\alpha^{q+1}=1$.
\item When $t$ is odd: $b^{t(q-1)}\left(\alpha^{q}+\alpha^{-q}\right)=\alpha+\alpha^{-1}$ if and only if $-\alpha^{q}-\alpha^{-q}=\alpha+\alpha^{-1}$, that is equivalent to $(\alpha^{q-1}+1)(\alpha^{q+1}+1)=0$.
\end{enumerate}

Finally,  in the case when none of the conditions are satisfied, the factor $D_{t}\left(x,a\right)-b^{t}\left(\alpha+\alpha^{-1}\right)\in\mathbb{F}_{q^2}[x]$ of  $D_{n}(x,a)$ is not in $\F_q[x]$. 
Using the Frobenius Automorphism we also conclude that  $D_{t}\left(x,a\right)-b^{qt}\left(\alpha^{q}+\alpha^{-q}\right)$ is an irreducible factor of $D_{n}(x,a)$ in $\F_{q^2}[x]$ and 
$$\left(D_{t}(x,a)-b^{t}(\alpha+\alpha^{-1})\right)\left(D_{t}(x,a)-b^{qt}(\alpha^{q}+\alpha^{-q})\right)$$ 
is an irreducible factor of  $D_{n}(x,a)$  in $\mathbb{F}_q[x]$. \qed
\end{proof}

\section{Dickson polynomials of the second kind}
Throughout this section,   $n$ represents a positive integer such that $\rad(n+1)$ divides $q-1$, i.e.,  every prime divisor of $n+1$ also divides $q-1$.

\begin{lemma}\label{lema5}
Let  $\Phi_{a}$ be the function defined at \ref{psiphi}.  Then 
\[\Phi_{a}(E_{n}(x,a))=\dfrac{x^{2(n+1)}-a^{n+1}}{x^{2}-a}.\]
\end{lemma}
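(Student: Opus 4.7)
The plan is to compute $\Phi_a(E_n(x,a))$ directly from the definition, mirroring the proof of Corollary \ref{lema4}. Since $E_n(x,a)$ is monic of degree $n$, the definition of $\Phi_a$ gives
\[\Phi_a(E_n(x,a)) = x^n\, E_n\!\left(x + \frac{a}{x},\, a\right).\]
Next, I would invoke Waring's identity for the second-kind Dickson polynomial in its correct form, namely
\[E_n\!\left(y + \frac{a}{y},\, a\right) = \frac{y^{n+1} - (a/y)^{n+1}}{y - a/y},\]
and specialize $y = x$.

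Substituting this back and clearing the inner fractions (multiplying the numerator by $x^{n+1}$ and the denominator by $x$, while absorbing the prefactor $x^n$), the expression collapses to
\[\Phi_a(E_n(x,a)) = x^n \cdot \frac{x\bigl(x^{n+1} - a^{n+1}/x^{n+1}\bigr)}{x^2 - a} \cdot \frac{1}{x^n} \cdot x^n = \frac{x^{2(n+1)} - a^{n+1}}{x^2 - a},\]
which is the claimed identity. Note that $x^2 - a$ divides $x^{2(n+1)} - a^{n+1}$ via the geometric sum $\sum_{i=0}^{n} a^{i} x^{2(n-i)}$, so the right-hand side is a genuine polynomial in $\F_q[x]$ and the identity holds in the polynomial ring (not merely as rational functions).

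The argument is essentially mechanical, so there is no real obstacle; the only subtlety is to apply the correct form of the Waring identity for $E_n$, which produces the denominator $y - a/y$ responsible for the surviving factor $x^2 - a$ in the final formula. This mirrors exactly the derivation of $\Phi_a(D_n(x,a)) = x^{2n} + a^n$ from Corollary \ref{lema4}, with the only structural difference being the extra quotient coming from the Waring identity for the second kind.
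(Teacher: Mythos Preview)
Your proof is correct and follows exactly the same route as the paper's: apply the definition $\Phi_a(E_n(x,a)) = x^n E_n(x+a/x,a)$, invoke Waring's identity for $E_n$, and simplify. The only quibble is that your displayed simplification step contains a stray factor $\tfrac{1}{x^n}\cdot x^n$ that cancels to $1$ and could be dropped for clarity; your added remark that $x^2-a$ divides $x^{2(n+1)}-a^{n+1}$ is a nice touch not present in the paper.
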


\begin{proof} Using the Waring's Identity for $E_n$, we have
\[\Phi_{a}(E_{n}(x,a))=x^{n}E_{n}\left(x+\dfrac{a}{x},a\right)=x^{n}\left(\dfrac{x^{n+1}-\dfrac{a^{n+1}}{x^{n+1}}}{x-\dfrac{a}{x}}\right)=\dfrac{x^{2(n+1)}-a^{n+1}}{x^{2}-a},\]
as we want to prove.\qed
\end{proof}

Using the previous  lemma and essentially the same steps of the proof of Theorems \ref{teofator1}, \ref{teoimp31} and \ref{teoimpult}, we obtain the following result,  enunciated  without proof.

\begin{theorem}\label{teofator2}
Let $a$  be an square in $\mathbb{F}_q^*$ with  $q\equiv 1\pmod 4$ or $n$ an even positive integer. In addition, $n$ satisfies that $\rad(n+1)$ divides $q-1$. Then every irreducible factor of  $E_{n}(x,a)$  in  $\mathbb{F}_q[x]$ is of the form $D_{t}(x,a)-b^{t}(\alpha+\alpha^{-1})$, where $b^2=a$, $\alpha\in\mathbb{F}_{q}^*$ and $t$  is a divisor of  $\frac{2(n+1)}{\gcd(2(n+1), q-1)}$,  satisfying the following conditions
\begin{enumerate}[(i)]
\item $\alpha^{\frac{2(n+1)}{t}}=1$,
\item $\rad(t)\mid ord_{q}(\alpha)$,
\item $\gcd\left(t,\frac{q-1}{ord_{q}(\alpha)}\right)=1$,
\item $(\alpha,t)\notin \{(1,1),(1,-1)\}$.
\end{enumerate} 
\end{theorem}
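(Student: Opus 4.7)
The plan is to mirror the proof of Theorem \ref{teofator1}, replacing $y^{2n}+1 = \Phi_1(D_n(y))$ by the polynomial $(y^{2(n+1)}-1)/(y^2-1) = \Phi_1(E_n(y))$ supplied by Lemma \ref{lema5}. First, Lemma \ref{proptipo1}(vii) gives $E_n(x,a) = b^n E_n(b^{-1}x,1)$ where $b\in\F_q^*$ satisfies $b^2=a$ (such $b$ exists since $a$ is assumed to be a square). Setting $y = b^{-1}x$ reduces the problem to factoring $E_n(y)$ over $\F_q[y]$; Lemma \ref{proptipo1}(iv) then converts each factor $D_t(y)-c$ back into the claimed form $D_t(x,a)-b^tc$.

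Next, if $f(y)$ is an irreducible factor of $E_n(y)$ and $h(y):=\Phi_1(f(y))$, then $h$ is a self-reciprocal divisor of $(y^{2(n+1)}-1)/(y^2-1)$. Under the hypotheses, the assumptions of Theorem \ref{muitoimp} hold for the exponent $2(n+1)$: indeed $\rad(2(n+1))\mid q-1$ since $q$ is odd, and if $n$ is even then $2(n+1)\equiv 2\pmod 4$, so $8\nmid 2(n+1)$. Consequently every irreducible factor of $y^{2(n+1)}-1$ over $\F_q$ has the shape $y^t-\alpha$ with $t,\alpha$ satisfying conditions (i)--(iii); the two factors $y-1$ and $y+1$ are precisely those cancelled by the denominator $y^2-1$, justifying the exclusion (iv). One now pairs each irreducible factor $y^t-\alpha$ of $h(y)$ with its reciprocal $y^t-\alpha^{-1}$: when $\alpha\neq\alpha^{-1}$, the identity
\[(y^t-\alpha)(y^t-\alpha^{-1}) = y^t\bigl(D_t(y+y^{-1})-(\alpha+\alpha^{-1})\bigr)\]
combined with the inversion $\Phi_1\circ\Psi_1 = \mathrm{Id}$ from Theorem \ref{teo3} yields $\Psi_1((y^t-\alpha)(y^t-\alpha^{-1})) = D_t(y)-(\alpha+\alpha^{-1})$, which divides $f(y)$; irreducibility of $f$ forces equality, giving the claimed form after substituting $y = b^{-1}x$.

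The main obstacle is ruling out the degenerate self-reciprocal case $\alpha=\alpha^{-1}=\pm1$ with $t>1$. For $\alpha=1$ the polynomial $y^t-1$ is always divisible by $y-1$, hence not irreducible. For $\alpha=-1$, Theorem \ref{irreducible} applied to $y+1$ (of exponent $2$) shows that $y^t+1$ is irreducible only when $t$ is a power of $2$ coprime to $(q-1)/2$ and satisfying the implication $4\mid t\Rightarrow 4\mid q-1$. If $q\equiv 1\pmod 4$, then $(q-1)/2$ is even, so the coprimality condition forces $t=1$; if $q\equiv 3\pmod 4$, the divisibility condition restricts to $t\le 2$, leaving only $t=2$, but then $y^2+1\mid y^{2(n+1)}-1$ would force $n$ odd, contradicting the hypothesis (which imposes $n$ even whenever $q\not\equiv 1\pmod 4$). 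Hence no self-reciprocal irreducible factor of $h(y)$ with $t>1$ arises, and the paired construction captures every irreducible factor of $E_n(y)$.
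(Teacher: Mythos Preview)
Your proof is correct and follows exactly the approach the paper itself prescribes: it mirrors the argument of Theorem \ref{teofator1}, replacing Corollary \ref{lema4} by Lemma \ref{lema5} so that the auxiliary polynomial becomes $(y^{2(n+1)}-1)/(y^2-1)$ instead of $y^{2n}+1$, with the denominator accounting for the exclusion in condition (iv). Your treatment of the degenerate self-reciprocal case $\alpha=\pm1$ is in fact more detailed than what the paper spells out, correctly handling the case split on $q\bmod 4$ and the parity of $n$ to rule out $y^t+1$ with $t>1$.
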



\begin{theorem}\label{teoimp32}
Let $a\in\mathbb{F}_q$ be a square in $\mathbb{F}_q^*$ with $q\equiv 3\pmod 4$ and $n$ an odd positive integer. In addition, $n$ satisfies that $\rad(n+1)$ divides $q-1$.
Let $b^2=a$, $\alpha\in\mathbb{F}_{q^2}^*$ and $t$ a divisor of  $\frac{2(n+1)}{\gcd(2(n+1), q-1)}$, satisfying  the conditions  {(i)}, {(ii)}, {(iii)} and {(iv)} of the previous  theorem in the field  $\mathbb{F}_{q^2}$. Then every irreducible factor of $E_{n}(x,a)$ in $\mathbb{F}_q$  has one of the following forms
\begin{enumerate}[(a)]
\item $D_{t}(x,a)-b^{t}(\alpha+\alpha^{-1})$ in the case that $\alpha\in\mathbb{F}_{q}^*$ or $\alpha^{q+1}=1$,
\item $\left(D_{t}(x,a)-b^{t}(\alpha+\alpha^{-1})\right)\left(D_{t}(x,a)-b^{t}(\alpha^{q}+\alpha^{-q})\right)$ in the case when  $\alpha$ does not satisfy  neither conditions of the item above. 
\end{enumerate} 
\end{theorem}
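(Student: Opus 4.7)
The plan is to run the same two-step argument used in Theorem \ref{teoimp31}, but with $E_n$ in place of $D_n$, relying on Lemma \ref{lema5} and Theorem \ref{teofator2} as the starting point. First I would view $E_n(x,a)\in\mathbb{F}_q[x]$ as an element of $\mathbb{F}_{q^2}[x]$. Since $q^2\equiv 1\pmod 4$ and the condition $\rad(n+1)\mid (q-1)$ implies $\rad(n+1)\mid (q^2-1)$, the hypothesis of Theorem \ref{teofator2} is satisfied over $\mathbb{F}_{q^2}$, and because $a$ is a square in $\mathbb{F}_q^*$ it is also a square in $\mathbb{F}_{q^2}^*$ (with the same $b$, $b^2=a$, $b\in\mathbb{F}_q$). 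Hence every irreducible factor of $E_n(x,a)$ in $\mathbb{F}_{q^2}[x]$ has the form
\[
D_t(x,a)-b^t(\alpha+\alpha^{-1}),
\]
with $\alpha\in\mathbb{F}_{q^2}^*$ and $t$ a divisor of $\frac{2(n+1)}{\gcd(2(n+1),q^2-1)}$ satisfying (i)--(iv) over $\mathbb{F}_{q^2}$.

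Next I would determine which of these factors already live in $\mathbb{F}_q[x]$. Since $b\in\mathbb{F}_q$, the coefficient $b^t$ is fixed by the Frobenius $\tau:\beta\mapsto\beta^q$, and $D_t(x,a)\in\mathbb{F}_q[x]$, so the factor is defined over $\mathbb{F}_q$ if and only if $\alpha+\alpha^{-1}\in\mathbb{F}_q$. The equation $(\alpha+\alpha^{-1})^q=\alpha+\alpha^{-1}$ factors as $(\alpha^{q+1}-1)(\alpha^{q-1}-1)=0$, giving exactly the case~(a) alternatives $\alpha\in\mathbb{F}_q^*$ or $\alpha^{q+1}=1$.

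If neither of those conditions holds, then $D_t(x,a)-b^t(\alpha+\alpha^{-1})$ is an irreducible factor of $E_n(x,a)$ in $\mathbb{F}_{q^2}[x]$ that does not lie in $\mathbb{F}_q[x]$. Applying $\tau$ coefficient-wise to this factor, and using that $E_n(x,a)\in\mathbb{F}_q[x]$ is $\tau$-invariant, I conclude that $D_t(x,a)-b^t(\alpha^q+\alpha^{-q})$ is also an irreducible factor of $E_n(x,a)$ in $\mathbb{F}_{q^2}[x]$. The two factors are distinct precisely because $\alpha+\alpha^{-1}\neq \alpha^q+\alpha^{-q}$ (which is exactly the negation of the conditions in~(a)), so their product
\[
\bigl(D_t(x,a)-b^t(\alpha+\alpha^{-1})\bigr)\bigl(D_t(x,a)-b^t(\alpha^q+\alpha^{-q})\bigr)
\]
divides $E_n(x,a)$ and has coefficients fixed by $\tau$, hence lies in $\mathbb{F}_q[x]$. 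Standard Galois reasoning (a product of a $\tau$-orbit of absolutely irreducible factors over $\mathbb{F}_{q^2}[x]$) shows it is irreducible over $\mathbb{F}_q[x]$, yielding case (b).

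The only subtle point, and the step I expect to require the most care, is the bookkeeping with parities: $n$ is odd so $n+1$ is even, and one must confirm that the divisibility relations tying $t$ to $\frac{2(n+1)}{\gcd(2(n+1),q-1)}$ in the $\mathbb{F}_q$-statement are consistent with the analogous divisor condition over $\mathbb{F}_{q^2}$ when Theorem \ref{teofator2} is applied there, and likewise that condition (iv) is preserved. Everything else is a direct transcription of the argument of Theorem \ref{teoimp31}, with Lemma \ref{lema5} replacing Corollary \ref{lema4}.
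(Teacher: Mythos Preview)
Your proposal is correct and follows exactly the approach the paper intends: the paper states this theorem without proof, saying explicitly that it is obtained ``using the previous lemma and essentially the same steps of the proof of Theorems \ref{teofator1}, \ref{teoimp31} and \ref{teoimpult}'', which is precisely what you do---pass to $\mathbb{F}_{q^2}$, apply Theorem \ref{teofator2} there, and use the Frobenius automorphism to sort the resulting factors into the $\mathbb{F}_q$-irreducible ones of types (a) and (b), mirroring the proof of Theorem \ref{teoimp31}.
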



\begin{theorem}\label{teoimpult2}
Let  $a\in\mathbb{F}_q$ be a non-square in $\mathbb{F}_q$ and $n$ be an integer such that $\rad(n+1)$ divides $q-1$. Let   $b\in\mathbb{F}_{q^2}\setminus\mathbb{F}_{q}$ such that $b^2=a$, 
$\alpha\in\mathbb{F}_{q^2}^*$ and $t$ be a divisor  of $2(n+1)$,  satisfying  the conditions {(i)}, {(ii)}, {(iii)} and {(iv)}  of Theorem \ref{teofator2} in   $\mathbb{F}_{q^2}$.  Then every irreducible factor of  $E_{n}(x,a)$ in  $\mathbb{F}_q$ has  one of the following forms
\begin{enumerate}[(a)]
\item $D_{t}(x,a)-b^{t}(\alpha+\alpha^{-1})$ in the case  that $t$ is even and either $\alpha\in\mathbb{F}_{q}^*$ or  $\alpha^{q+1}=1$, or $t$ is odd  and either $\alpha^{q-1}=-1$ or $\alpha^{q+1}=-1$.
\item $\left(D_{t}(x,a)-b^{t}(\alpha+\alpha^{-1})\right)\left(D_{t}(x,a)-b^{qt}(\alpha^{q}+\alpha^{-q})\right)$ in the case that  $\alpha$ and $t$ do not satisfy  neither  conditions of the item above.
\end{enumerate}
\end{theorem}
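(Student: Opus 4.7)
The plan is to mirror the proof of Theorem \ref{teoimpult}, but with Lemma \ref{lema5} replacing Corollary \ref{lema4} so that the connection to binomials $x^N\pm a^M$ is made through the factor $\frac{x^{2(n+1)}-a^{n+1}}{x^2-a}$ rather than through $x^{2n}+a^n$. First I would view $E_n(x,a)$ as an element of $\mathbb{F}_{q^2}[x]$; in this bigger field, $a=b^2$ with $b\in\mathbb{F}_{q^2}$, and since $q^2\equiv 1\pmod 4$, Theorem \ref{teofator2} applies and gives that every irreducible factor of $E_n(x,a)$ in $\mathbb{F}_{q^2}[x]$ has the form $D_t(x,a)-b^t(\alpha+\alpha^{-1})$ with $\alpha$ and $t$ satisfying conditions \textit{(i)}--\textit{(iv)} over $\mathbb{F}_{q^2}$.

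Next I would determine which of these $\mathbb{F}_{q^2}$-factors actually lie in $\mathbb{F}_q[x]$ by imposing invariance under the Frobenius $\tau:\beta\mapsto\beta^q$. Because $D_t(x,a)\in\mathbb{F}_q[x]$, the requirement reduces to $b^t(\alpha+\alpha^{-1})\in\mathbb{F}_q$, i.e.
\[
b^{t(q-1)}(\alpha^q+\alpha^{-q})=\alpha+\alpha^{-1}.
\]
The key input here is $b^{q-1}=a^{(q-1)/2}=-1$, which holds precisely because $a$ is a non-square in $\mathbb{F}_q$. I would then split according to the parity of $t$: if $t$ is even the condition collapses to $\alpha^q+\alpha^{-q}=\alpha+\alpha^{-1}$, equivalent via $(\alpha^{q+1}-1)(\alpha^{q-1}-1)=0$ to $\alpha\in\mathbb{F}_q^*$ or $\alpha^{q+1}=1$; if $t$ is odd a sign appears and the condition becomes $\alpha^q+\alpha^{-q}=-(\alpha+\alpha^{-1})$, equivalent to $(\alpha^{q+1}+1)(\alpha^{q-1}+1)=0$, i.e.\ $\alpha^{q-1}=-1$ or $\alpha^{q+1}=-1$. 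This yields case \textit{(a)} of the theorem.

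For the remaining case, when none of the above holds, the factor $D_t(x,a)-b^t(\alpha+\alpha^{-1})$ is not fixed by $\tau$, so its $\tau$-conjugate $D_t(x,a)-b^{qt}(\alpha^q+\alpha^{-q})$ is a distinct irreducible factor of $E_n(x,a)$ in $\mathbb{F}_{q^2}[x]$. I would then multiply these two conjugate factors; since $\tau$ permutes them, the product has coefficients fixed by $\tau$, hence lies in $\mathbb{F}_q[x]$, giving case \textit{(b)}. Irreducibility over $\mathbb{F}_q$ of the product follows from the fact that any nontrivial factor over $\mathbb{F}_q$ would have to be a proper divisor split between the two $\mathbb{F}_{q^2}$-irreducible factors, which is impossible.

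The main subtlety will be the parity analysis combined with the sign coming from $b^{q-1}=-1$: one must keep careful track of the exponent $qt$ versus $t$ in case \textit{(b)} (note that for $t$ even, $b^{qt}=b^t$, so the statement is really only distinct from the square case when $t$ is odd). Also, the trivial exceptional pair $(\alpha,t)=(1,1)$ from condition \textit{(iv)} of Theorem \ref{teofator2} must be excluded so that the factor $D_t(x,a)-b^t(\alpha+\alpha^{-1})$ is non-constant; this, together with verifying that the factors produced exhaust all irreducible factors of $E_n(x,a)$ over $\mathbb{F}_q$ (by a degree count or by invoking bijectivity of $\Phi_a,\Psi_a$ from Theorem \ref{teo3}), is where the argument needs the most care.
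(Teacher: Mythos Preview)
Your proposal is correct and follows exactly the approach the paper indicates: the paper does not give an explicit proof of this theorem, stating only that it follows ``using the previous lemma and essentially the same steps of the proof of Theorems \ref{teofator1}, \ref{teoimp31} and \ref{teoimpult},'' and your argument carries out precisely that plan, mirroring the proof of Theorem \ref{teoimpult} with Lemma \ref{lema5} and Theorem \ref{teofator2} in place of Corollary \ref{lema4} and Theorem \ref{teofator1}. Your parity analysis of $b^{t(q-1)}$ via $b^{q-1}=-1$ and the Frobenius-conjugate pairing in case \textit{(b)} match the paper's reasoning in Theorem \ref{teoimpult} line for line.
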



\section{Dickson polynomial in characteristic 2}
Throughout this section,  $\F_q$ is a finite field of characteristic $2$. We note that if $n=2^{r}s$, where   $r\geq 0$ and $s$ odd,  then $D_{n}(x)=[D_{s}(x)]^{2^{r}}$, therefore, in order to splits  $D_{n}(x)$ into irreducible factors, it is enough to factorize $D_{s}(x,1)$. 
Hence, we  assume that  $n$ is positive integer such that $\rad(n)$ divides $q-1$. 

\begin{lemma}\label{lemacar2}
Let  $n=2m+1$ be a positive odd integer. 
\begin{enumerate}[(a)]
\item $D_{n}(x)=xF_{n}(x)^{2}$, for some polynomial $F_{n}(x)$ of degree  $m$, with  $F_{n}(0)\neq 0$,
\item $(x+1)\Phi_{1}(F_{n}(x))=x^{n}+1$.
\end{enumerate}
\end{lemma}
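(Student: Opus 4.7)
The plan is to read part (a) off the explicit formula and then leverage it to compute part (b) via a clean squaring argument in characteristic $2$.

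For part (a), I will note that in the expansion $D_n(x,1)=\sum_{i=0}^{m}\frac{n}{n-i}\binom{n-i}{i}(-1)^i x^{n-2i}$ every monomial has odd exponent $n-2i$, so $D_n(x)=x\,G(x^2)$ for a polynomial $G\in\F_q[y]$ of degree $m$. Since $\F_q$ is perfect of characteristic $2$, if $G(y)=\sum c_i y^i$, then $G(x^2)=\sum c_i x^{2i}=\bigl(\sum \sqrt{c_i}\,x^i\bigr)^2$, so we may take $F_n(x)=\sum\sqrt{c_i}\,x^i\in\F_q[x]$, which is monic of degree $m$ and satisfies $F_n(0)^2=c_0=G(0)$, the constant term of $G$, i.e.\ the coefficient of $x^1$ in $D_n(x,1)$. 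That coefficient is the $i=m$ summand, namely $\frac{2m+1}{m+1}\binom{m+1}{m}(-1)^m=(2m+1)(-1)^m$, which reduces to $1$ in characteristic $2$. Hence $F_n(0)=1\neq0$.

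For part (b), the strategy is to compute $[\Phi_1(F_n(x))]^2$ via Waring's identity and use the characteristic-$2$ miracle $u^2+v^2=(u+v)^2$ to extract a square root. By the definition of $\Phi_1$ and by (a),
\[
[\Phi_1(F_n(x))]^2=x^{2m}F_n\!\left(x+\tfrac1x\right)^{\!2}=x^{2m}\cdot\frac{D_n(x+x^{-1},1)}{x+x^{-1}}=\frac{x^{2m}\bigl(x^n+x^{-n}\bigr)}{x+x^{-1}}.
\]
Clearing denominators and using $2m+1=n$, the right-hand side becomes $\dfrac{x^{2n}+1}{x^2+1}$, which in characteristic $2$ equals $\bigl(\tfrac{x^n+1}{x+1}\bigr)^{\!2}$. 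Both $\Phi_1(F_n(x))$ and $(x^n+1)/(x+1)$ are polynomials in $\F_q[x]$, and squaring is injective on $\F_q[x]$, so $\Phi_1(F_n(x))=(x^n+1)/(x+1)$, which is exactly the claim $(x+1)\Phi_1(F_n(x))=x^n+1$.

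The only subtle step is the existence of $F_n\in\F_q[x]$ with $F_n^2=G(x^2)$ in part (a); this rests on perfectness of $\F_q$ in characteristic $2$, and everything else is straightforward bookkeeping together with Waring's identity. I expect no genuine obstacle beyond keeping the exponents $n=2m+1$ and $n-1=2m$ matched correctly in the simplification of the rational expression above.
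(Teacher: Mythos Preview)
Your proof is correct and follows essentially the same route as the paper. For (a) the paper writes down the explicit polynomial $F_n(x)=\sum_{i=0}^{m}\frac{n}{n-i}\binom{n-i}{i}x^{m-i}$ (whose integer coefficients lie in $\F_2$ and hence square to themselves), which is just your perfectness argument made concrete; for (b) the paper applies $\Phi_1$ to $D_n=x\,F_n^{2}$ via multiplicativity and Corollary~\ref{lema4} to obtain $(x^{2}+1)\,\Phi_1(F_n)^{2}=x^{2n}+1$ and then takes square roots, exactly as you do after unwinding Waring's identity directly.
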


\begin{proof}
\begin{enumerate}[(a)]
\item Let define
\[F_{n}(x)=\sum_{i=0}^{m}\dfrac{n}{n-i} {n-i \choose i }x^{m-i}.\]
Then
\begin{eqnarray*}
xF_{n}(x)^{2}&=& x\left[\sum_{i=0}^{m}\dfrac{n}{n-i} {n-i \choose i }x^{m-i}\right]^{2}= x\sum_{i=0}^{m}\dfrac{n}{n-i} {n-i \choose i }x^{2m-2i}\\
&=& \sum_{i=0}^{m}\dfrac{n}{n-i} {n-i \choose i }x^{2m+1-2i}= \sum_{i=0}^{m}\dfrac{n}{n-i}{n-i \choose i }(-1)^{i} x^{n-2i}\\
&=& \sum_{i=0}^{\lfloor n/2\rfloor}\dfrac{n}{n-i} {n-i \choose i }(-1)^{i}x^{n-2i}= D_{n}(x,1).
\end{eqnarray*}
\item By Corollary \ref{lema4} item  {(i)} and  Theorem \ref{teo3} follows that
\[x^{2n}+1=\Phi_{1}(D_{n}(x,1))=\Phi_{1}(x)\Phi_{1}(F_{n}(x)^{2})=(x^{2}+1)\Phi_{1}(F_{n}(x))^{2}.\]
Therefore  $(x^{n}+1)^{2}=[(x+1)\Phi_{1}(F_{n}(x))]^{2}$.\qed
\end{enumerate}
\end{proof}

\begin{theorem}\label{teocar2tipo1}
Let  $\F_q$ be a finite field such that $char(\mathbb{F}_q)=2$, $a\in \F_q^*$ and  $n$ be a positive integer such that $\rad(n)|(q-1)$. Then every irreducible factor of $D_n(x,a)$ different than $x$, has multiplicity $2$ and is of the form  $D_{t}(x,a)-b^{t}(\alpha+\alpha^{-1})$, where  $b^2=a$, $\alpha\in\mathbb{F}_{q}^*$ and $t$ is a divisor of  $\frac{n}{\gcd(n,q-1)}$, satisfying the following conditions
\begin{enumerate}[(i)]
\item $\alpha^{\frac{n}{t}}=1$,
\item $rad(t)\mid ord_{q}(\alpha)$,
\item $gcd\left(t,\frac{q-1}{ord_{q}(\alpha)}\right)=1$,
\item $(\alpha,t)\neq (1,1)$.
\end{enumerate} 
\end{theorem}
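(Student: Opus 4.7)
The plan is to mirror the strategy of Theorem~\ref{teofator1}, with Lemma~\ref{lemacar2} replacing Corollary~\ref{lema4} in the characteristic-$2$ setting. First I would note that since $\mathbb{F}_q$ has characteristic~$2$, $q-1$ is odd, and hence the hypothesis $\rad(n)\mid q-1$ forces $n$ itself to be odd. Moreover, every element of $\F_q^*$ is a square, so there is a unique $b\in\F_q^*$ with $b^2=a$, and Lemma~\ref{proptipo1}(iv) reduces the factorization of $D_n(x,a)$ to that of $D_n(y,1)=D_n(y)$ under the substitution $y=b^{-1}x$.

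Next, Lemma~\ref{lemacar2}(a) gives $D_n(y)=yF_n(y)^2$, which already explains both why $x$ is the unique irreducible factor of multiplicity~$1$ and why every other irreducible factor has multiplicity exactly~$2$, provided $F_n(y)$ is separable. Separability follows because $y^n+1$ is separable in characteristic~$2$ (as $n$ is odd, the derivative $ny^{n-1}$ has only $0$ as a root, which is not a root of $y^n+1$), so by Lemma~\ref{lemacar2}(b) and the bijectivity of $\Phi_1$ from Theorem~\ref{teo3}, both $\Phi_1(F_n(y))=(y^n+1)/(y+1)$ and $F_n(y)$ are separable. The problem thus reduces to listing the irreducible factors of $(y^n+1)/(y+1)$ in $\F_q[y]$ and transporting them through $\Psi_1$.

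The third step is to apply Theorem~\ref{muitoimp} to $y^n+1=y^n-1$, which is legitimate because $n$ odd gives $8\nmid n$ and $\rad(n)\mid q-1$ by hypothesis: every irreducible factor has the form $y^t-\alpha$ with $t\mid n/\gcd(n,q-1)$. Divisibility $y^t-\alpha\mid y^n+1$ translates to $\alpha^{n/t}=1$ (condition~(i)), while irreducibility of $y^t-\alpha$, via Theorem~\ref{irreducible} applied to $y-\alpha$, is equivalent to $\rad(t)\mid\ord_q(\alpha)$ and $\gcd(t,(q-1)/\ord_q(\alpha))=1$ (conditions~(ii) and~(iii)); the third clause of Theorem~\ref{irreducible} is vacuous because $t$ divides the odd integer $n$. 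In characteristic~$2$, the equality $\alpha=\alpha^{-1}$ forces $\alpha=1$, and together with (ii)--(iii) this in turn forces $t=1$, so the unique $1$-self-reciprocal irreducible factor is $y+1$; this is precisely the factor removed by Lemma~\ref{lemacar2}(b), which accounts for the exclusion $(\alpha,t)\neq(1,1)$ in~(iv).

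Finally, every remaining irreducible factor $y^t-\alpha$ of $\Phi_1(F_n(y))$ pairs with its $1$-reciprocal $y^t-\alpha^{-1}$, and Theorem~\ref{teo3}(c) identifies $\Psi_1((y^t-\alpha)(y^t-\alpha^{-1}))=D_t(y)-(\alpha+\alpha^{-1})$ as an irreducible factor of $F_n(y)$. Reversing the substitution $y=b^{-1}x$ as in the proof of Theorem~\ref{teofator1} yields the monic irreducible factor $D_t(x,a)-b^t(\alpha+\alpha^{-1})$ of $D_n(x,a)$, which inherits multiplicity~$2$ from $F_n(y)^2$. The main obstacle I anticipate is careful bookkeeping: separating divisibility from irreducibility when invoking Theorems~\ref{muitoimp} and~\ref{irreducible}, and checking that conditions~(i)--(iv) together capture exactly the irreducible factors of $(y^n+1)/(y+1)$ rather than those of $y^n+1$ alone.
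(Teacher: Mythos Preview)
Your proposal is correct and follows essentially the same route as the paper: reduce to $a=1$ using that every element of $\F_q$ is a square in characteristic~$2$, invoke Lemma~\ref{lemacar2} to write $D_n(y)=yF_n(y)^2$ and $\Phi_1(F_n(y))=(y^n+1)/(y+1)$, and then repeat the argument of Theorem~\ref{teofator1} with $y^n-1$ in place of $y^{2n}+1$. The paper's own proof is a brief two-sentence sketch ending in ``from here the proof is essentially the same proof of Theorem~\ref{teofator1}''; your version supplies details the paper omits, in particular the observation that $n$ must be odd, the separability of $F_n$ (justifying multiplicity exactly~$2$), and the identification of the excluded pair $(\alpha,t)=(1,1)$ with the removed factor $y+1$.
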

\begin{proof}
In characteristic 2,  every element of $\F_q$ is an square. In addition, by Lemma \ref{proptipo1} and Lemma \ref{lemacar2}, we have  that
$$D_n(x,a)=b^{n} D_n(b^{-1}x,1)=b^{n-1}x (F_n(b^{-1}x))^2.$$
At this point, in order to find the factorization of $D_n(x,a)$, it is enough to find the factorization of $F_n(x)$. 
On the  other hand,  $\Phi_1(F_n(x))=\dfrac {x^n+1}{x+1}$, so from here the proof is  essentially the same proof of Theorem \ref{teofator1}.\qed 
\end{proof}

To conclude  this section, we analyse the factorization of Dickson polynomials of the second kind in characteristic $2$, that, as we shall see, is almost the same factorization of Dickson polynomials of the first kind. 
In  fact, if  $n+1=2^{r}(s+1)$ with  $r\geq 0$ and $s$ even, then  $E_{n}(x,1)=[E_{s}(x,1)]^{2^{r}}x^{2^{r}-1}$.
Therefore, in order to find the factors of  $E_{n}(x,1)$, we need to factorize   $E_{s}(x,1)$ with $s+1$ odd.  So, we can assume that  $n+1$ is odd. Then
$$E_{n}(x,1)=E_{n}\Bigl(y+\dfrac{1}{y},1\Bigr)=\dfrac{y^{n+1}-\Bigl(\dfrac{1}{y}\Bigr)^{n+1}}{y-\dfrac{1}{y}}=\frac{D_{n+1}\Bigl(y+\dfrac{1}{y},1\Bigr)}{y+\dfrac{1}{y}}=\frac{D_{n+1}(x,1)}{x}.$$
From Lemma \ref{lemacar2} item {(1),} we conclude that $E_{n}(x,1)=F_{n+1}(x)^{2}$, and therefore  the  factorization of $E_n(x,a)$ with the condition that $\rad(n+1)|(q-1)$ can be found using Theorem  \ref{teocar2tipo1}.


\begin{thebibliography}{99}
\bibitem{Alaca} 
Alaca, S., \textit{Congruences for Brewer sums}. Finite Fields Appl. \textbf{13} (2007) 1-19.

 


\bibitem{Bhargava} 
Bhargava, M. and Zieve, M., \textit{Factoring Dickson polynomials over finite fields}. Finite Fields Appl. \textbf{5} (1999) 103-111.


\bibitem{BGM} Blake, I. F., Gao, S., Mullin, R. C., {\it Explicit factorization  of $x^{2^k}+1$ over $\F_p$ with $p\equiv 3\pmod 4$}, Appl. Algebra Engrg. Comm. Comput. {\bf 4}  89-94 (1993).

\bibitem{brewer} Brewer,B.W.,  \textit{On certain character sums}, Trans. Amer. Math. Soc. 99 (1961) 241-245

\bibitem{BGO} Brochero Mart\'inez, F. E.,  Giraldo Vergara, C. R.,  de Oliveira, L., {\it Explicit factorization of $x^ n-1\in \F_q[x]$}. Des. Codes Cryptogr. {\bf 77} , no. 1, 277-286 (2015)

\bibitem{BRS}  Brochero Mart\'inez, F. E., Reis, Lucas, Silva-Jesus, Lays, {\it Factorization of composed polynomials and applications}, Discrete Math. \textbf{342} (2019)  DOI: j.disc.2019.111603.


\bibitem{CLT} Chen, B., Li, L., Tuerhong, R., {\it Explicit factorization of $x^{2^mp^n}-1 $ over a finite field}. Finite fields and Their Applications {\bf 24} 95-104 (2013).

\bibitem{Chou}
Chou, W.S., \textit{The Factorization of Dickson polynomials over finite fields}. Finite Fields Appl. \textbf{3} (1997) 84-96.

\bibitem{FiYu3} Fitzgerald, R. W.,  Yucas, J. L. \textit{Factors of Dickson polynomials over finite fields.}
F
inite Fields Appl. {\bf 11} (2005), 724-737.

\bibitem{FiYu} Fitzgerald R. W., Yucas J. L.,  {\it Explicit factorization of cyclotomic and Dickson polynomials over finite fields}.  Arithmetic of Finite Fields. Lecture Notes in Computer Science, vol. {\bf 4547}, pp. 1-10. Springer, Berlin (2007).

\bibitem{FiYu2} 
Fitzgerald, R. W. and Yucas, J. L., \textit{Generalized Reciprocals, Factors of Dickson polynomials and Generalized Cyclotomic Polynomials over Finite Fields}. Finite Fields Appl. \textbf{13} (2007) 492-515.



\bibitem{GaMu}Gao, S., Mullen, G., 
{\it Dickson polynomials and irreducible polynomials over finite fields.} 
J. Number Theory {\bf 49} (1994),  118-132. 

\bibitem{Lidl2} 
Lidl, R., Mullen, G.L. and Turnwald, G., \textit{Dickson polynomials}. Pitman Monographs and Surveys in Pure and Applied Math. Essex (1993).

\bibitem{LiNi} Lidl, R., Niederreiter, H.,  {\it Finite Fields}. Encyclopedia of Mathematics and Its Applications, Vol 20, Addison-Wesley 1983.

\bibitem{Lidl2} 
Lidl, R., Mullen, G.L. and Turnwald, G., \textit{Dickson polynomials}. Pitman Monographs and Surveys in Pure and Applied Math. Essex (1993).

\bibitem{Mey} Meyn H., {\it Factorization of the cyclotomic polynomials $x^{2^n}+1$ over finite fields}. Finite Fields Appl. {\bf 2}, 439-442 (1996).

\bibitem{Moi} Moisio, M.J. {\it On certain values of Kloosterman sums.}
IEEE Trans. Inform. Theory {\bf 55} (2009),  3563-3564.
 
\bibitem{Tosun} 
Tosun, S., \textit{Explicit factorizations of generalized Dickson polynomials of order $2^{m}$ via generalized cyclotomic polynomials over finite fields}. Finite Fields Appl. \textbf{38} (2016) 40-56.


\bibitem{Tosun1} 
Tosun, S., \textit{Explicit factors of generalized cyclotomic polynomials and generalized Dickson polynomials of order $2^m3$ over finite fields}  Discrete Math. \textbf{342} (2019)  DOI: j.disc.2019.111618

\bibitem{Tur} Turnwald, G. 
{\it Reducibility of translates of Dickson polynomials.}
Proc. Amer. Math. Soc. {\bf 126} (1998), 965-971. 

\bibitem{TuWa}Tuxanidy, A., Wang, Q., {\it Composed products and factors of cyclotomic polynomials over finite fields}. Des. Codes Cryptogr. {\bf 69} (2013), 203-231. 

 \bibitem{WaWa}Wang, L., Wang, Q., {\it On explicit factors of cyclotomic polynomials over finite fields}. Des. Codes Cryptogr. {\bf 63}, no. 1, 87-104 (2012).


\end{thebibliography}
\end{document}